\let\@@enum@org\@@enum@
\def\@@enum@[#1]{\@@enum@org[\normalfont #1]}
\newtheorem{thm}{Theorem}[section]
\newtheorem{lem}[thm]{Lemma}
\newtheorem{cor}[thm]{Corollary}
\newtheorem{prop}[thm]{Proposition}
\newtheorem*{claim*}{Claim}
\theoremstyle{remark}
\newtheorem*{hint}{Hint}
\theoremstyle{definition}
\newtheorem{defn}[thm]{Definition}
\newtheorem{prob}{Problem}[section]
\newcommand\be{\begin{enumerate}}
\newcommand\ee{\end{enumerate}}
\newcommand\bp{\begin{proof}}
\newcommand\ep{\end{proof}}
\newcommand\bpp{\begin{prop}}
\newcommand\epp{\end{prop}}
\newcommand\bpb{\begin{prob}}
\newcommand\epb{\end{prob}}
\newcommand\bd{\begin{defn}}
\newcommand\ed{\end{defn}}
\newcommand\bh{\begin{hint}}
\newcommand\eh{\end{hint}}
\newcommand\N{\mathbb{N}}
\newcommand\Q{\mathbb{Q}}
\newcommand\C{\mathbb{C}}
\newcommand\Z{\mathbb{Z}}
\newcommand\GL{\operatorname{GL}}
\newcommand\PGL{\operatorname{PGL}}
\newcommand\SO{\operatorname{SO}}
\newcommand\gam{\Gamma}
\newcommand\bZ{\mathbb{Z}}
\newcommand\Mod{\operatorname{Mod}}
\newcommand\mC{\mathcal{C}}
\newcommand\rk{rk }
\def\thetitle{{Quotients of surface groups and homology of finite covers via quantum representations}}
\def\theshorttitle{}
\begin{document}
\raggedbottom
\title[\theshorttitle]\thetitle
\date{\today}
\keywords{}

\author{Thomas Koberda}
\address{Department of Mathematics, University of Virginia, Charlottesville, VA 22904-4137, USA}
\email{thomas.koberda@gmail.com}

\author{Ramanujan Santharoubane}
\address{Institut de Math\'ematiques de Jussieu (UMR 7586 du CNRS), Equipe Topologie et G\'eom\'etrie Alg\'ebriques, Case 247, 4 pl. Jussieu, 75252 Paris Cedex 5, France}
\email{ramanujan.santharoubane@imj-prg.fr}

\begin{abstract}
We prove that for each sufficiently complicated orientable surface $S$, there exists an infinite image linear representation $\rho$ of $\pi_1(S)$ such that if $\gamma\in\pi_1(S)$ is freely homotopic to a simple closed curve on $S$, then $\rho(\gamma)$ has finite order. Furthermore, we prove that given a sufficiently complicated orientable surface $S$, there exists a regular finite cover $S'\to S$ such that $H_1(S',\Z)$ is not generated by lifts of simple closed curves on $S$, and we give a lower bound estimate on the index of the subgroup generated by lifts of simple closed curves. We thus answer two questions posed by Looijenga, and independently by Kent, Kisin, March\'e, and McMullen. The construction of these representations and covers relies on quantum $\text{SO}(3)$ representations of mapping class groups.
\end{abstract}

\maketitle

\section{Introduction}
\subsection{Main results}
Let $S=S_{g,n}$ be an orientable surface of genus $g\geq 0$ and $n\geq 0$ boundary components, which we denote by $S_{g,n}$. A \emph{simple closed curve} on $S$ is an essential embedding of the circle $S^1$ into $S$. We will call an element $1\neq g\in\pi_1(S)$ \emph{simple} if there is a simple representative in its conjugacy class. Note that contrary to a common convention, we are declaring curves which are freely homotopic to boundary components to be simple.

Our main result is the following:

\begin{thm}\label{thm:infinite image}
Let $S=S_{g,n}$ be a surface of genus $ g$ and with $n $ boundary components, excluding the $(g,n)$ pairs $\{(0,0), (0,1), (0,2), (1,0)\}$. There exists a linear representation \[\rho\colon\pi_1(S)\to\GL_d(\C)\] such that:
\begin{enumerate}
\item
The image of $\rho$ is infinite.
\item
If $g\in\pi_1(S)$ is simple then $\rho(g)$ has finite order.
\end{enumerate}
\end{thm}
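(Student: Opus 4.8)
The plan is to realize $\pi_1(S)$ as a point-pushing subgroup of a mapping class group and then to pull back a quantum $\SO(3)$ representation. Write $S^\bullet=S_{g,n+1}$ for $S$ with one additional marked point $*$. The four excluded pairs $(g,n)$ are exactly those with $\chi(S)\ge 0$, so in every remaining case $\chi(S)<0$, which is precisely the hypothesis making the Birman exact sequence exact with kernel $\pi_1(S)$; thus there is an \emph{injective} point-pushing homomorphism $\mathrm{Push}\colon\pi_1(S)\hookrightarrow\Mod(S^\bullet)$ whose image is normal (it is the kernel of the map $\Mod(S^\bullet)\to\Mod(S)$ that forgets $*$). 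The one combinatorial input I need is the standard formula: if $\gamma\in\pi_1(S)$ is freely homotopic to a simple closed curve, then $\mathrm{Push}(\gamma)=T_{\gamma_L}T_{\gamma_R}^{-1}$, where $\gamma_L$ and $\gamma_R$ are the boundary curves of a regular neighborhood in $S^\bullet$ of $\gamma\cup\{*\}$; in particular $\gamma_L$ and $\gamma_R$ are disjoint, so $T_{\gamma_L}$ and $T_{\gamma_R}$ commute.

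Next I would fix a sufficiently large odd prime $p$ and take the $\SO(3)$ TQFT, in the Blanchet--Habegger--Masbaum--Vogel normalization, evaluated on $S^\bullet$ with $*$ carrying a fixed nontrivial admissible color: this produces a finite-dimensional complex vector space $V$ and a projective representation $\rho_p\colon\Mod(S^\bullet)\to\PGL(V)$. Because the framing anomaly of this TQFT is a root of unity, the obstruction class of $\rho_p$ in $H^2(\Mod(S^\bullet);\C^*)$ is torsion; hence $\rho_p$ lifts to a linear representation of a finite cyclic central extension of $\Mod(S^\bullet)$, and tensoring that lift with itself $N$ times, for $N$ the order of the obstruction, makes the center act trivially, yielding a genuine linear representation $\widehat\rho\colon\Mod(S^\bullet)\to\GL_d(\C)$. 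The key feature of quantum representations I use is that every Dehn twist acts, in a basis adapted to its curve, diagonally by roots of unity, so $\widehat\rho(T_c)$ has finite order for every simple closed curve $c$. Set $\rho=\widehat\rho\circ\mathrm{Push}\colon\pi_1(S)\to\GL_d(\C)$. If $\gamma$ is simple, then $\rho(\gamma)=\widehat\rho(T_{\gamma_L})\,\widehat\rho(T_{\gamma_R})^{-1}$ is a product of two commuting finite-order operators, hence has finite order. This proves $(2)$. (For $n\ge 1$ the group $\pi_1(S)$ is free and the central-extension bookkeeping can be dispensed with.)

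It remains to prove $(1)$, and I would argue by contradiction: suppose the image of $\rho_p\circ\mathrm{Push}$ in $\PGL(V)$ is finite. Since $\mathrm{Push}(\pi_1(S))$ is normal in $\Mod(S^\bullet)$, this is a finite normal subgroup of $\rho_p(\Mod(S^\bullet))$. By irreducibility of the quantum representation (after Roberts, in the marked-surface setting) together with Zariski-density of $\rho_p(\Mod(S^\bullet))$ in the connected center-free group $\mathrm{PU}(V)$ for $p$ large (after Freedman--Larsen--Wang), any such finite normal subgroup is normalized by all of $\mathrm{PU}(V)$, hence is central, hence trivial. So $\rho_p$ would vanish on $\mathrm{Push}(\pi_1(S))$. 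This contradicts a direct computation: choosing $\gamma$ nonseparating (or, in genus zero, enclosing exactly two marked points), the curves $\gamma_L$, $\gamma_R$ and the point $*$ cobound a pair of pants in $S^\bullet$, and computing in a basis adapted to a pants decomposition containing $\gamma_L$ and $\gamma_R$, the admissibility constraints at that pair of pants (using that $*$ carries a nontrivial color) force $\rho_p(T_{\gamma_L}T_{\gamma_R}^{-1})=\rho_p(\mathrm{Push}(\gamma))$ to act by distinct twist-coefficient ratios on different basis vectors, so it is not a scalar. Therefore $\rho_p\circ\mathrm{Push}$ has infinite image, and since $\PGL(V)\hookrightarrow\PGL(V^{\otimes N})$ is injective, so does $\rho$.

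I expect $(1)$ to be the main obstacle. It is the only place a genuinely global property of quantum representations enters, and making it work requires marshalling three nontrivial ingredients at once: irreducibility of the punctured-surface $\SO(3)$ representations; a non-smallness statement strong enough to forbid finite normal subgroups, for which Zariski-density is the natural tool but is itself a deep theorem and forces $p$ to be large; and the concrete verification that point-pushing along a well-chosen simple curve is projectively nontrivial — which is also what ultimately determines the allowed range of $(g,n)$ and should recover the four exclusions. By contrast, $(2)$ is essentially formal once one knows that twist coefficients and the framing anomaly are roots of unity, and the only genuinely topological point there, the formula $\mathrm{Push}(\gamma)=T_{\gamma_L}T_{\gamma_R}^{-1}$ with $\gamma_L,\gamma_R$ disjoint, is classical.
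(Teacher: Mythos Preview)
Your treatment of part (2) coincides with the paper's (this is Lemma~\ref{lem:simple finite}): the point--pushing image of a simple loop is a product of two commuting Dehn twists, and Dehn twists have finite order under $\rho_p$ by Theorem~\ref{thm:dt finite}. The passage from a projective to a linear representation is also handled in the same spirit.

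Your argument for part (1), however, takes a genuinely different route. The paper proves infinitude by an explicit computation: it first treats $S_{0,3}$, where $V_p(S^2,1,1,2,2)$ is two--dimensional, writes down $\rho_p(\gamma_1)$ and $\rho_p(\gamma_2)$ as explicit $2\times 2$ matrices in a skein--theoretic basis, and shows that $|\operatorname{tr}\rho_p(\gamma_1\gamma_2^{-1})|\to 5$ as $A_p\to e^{i\pi/6}$, forcing an eigenvalue off the unit circle for $p\gg 0$ (Theorem~\ref{thm:sphere}). For general $(g,n)$ the paper embeds a copy of $S_{0,3}$ in $S_{g,n}$ and uses the TQFT gluing axiom \cite[Theorem~1.14]{bhmv} to split off a tensor factor on which $\pi_1(S_{0,3})$ acts exactly as above. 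Your approach replaces this hands--on calculation with a structural argument: a finite image of the normal point--pushing subgroup, together with density of $\rho_p(\Mod^1(S))$ in $\mathrm{PU}(V)$, would force that image to be central and hence trivial, which you then rule out by a much softer non--scalarity check on a single $T_{\gamma_L}T_{\gamma_R}^{-1}$.

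Your route is conceptually clean and avoids the explicit skein calculus, but it imports substantially heavier external input. The density statement you invoke is not quite what Freedman--Larsen--Wang prove: their theorem is for Jones representations of braid groups (genus zero), and covering $\Mod^1(S_{g,n})$ for all admissible $(g,n)$, acting on the specific colored module $V_p(\hat S_{g,n},x)$ with the banded point colored by $2$, requires further results (Larsen--Wang and successors) whose exact scope you would need to pin down. That is the one place your argument is not self--contained; the paper's approach, by contrast, reduces everything to a single $2\times 2$ trace computation plus the standard TQFT splitting, and as a dividend exhibits an explicit infinite--order element $\gamma_1\gamma_2^{-1}$ and a nonabelian free subgroup via ping--pong in $\PGL_2(\C)$ (Corollaries~\ref{cor:free group} and~\ref{cor:free group2}).
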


Thus, Theorem \ref{thm:infinite image} holds for all surface groups except for those which are abelian, in which case the result obviously does not hold. In Subsection \ref{subsect:dimension}, we will give an estimate on the dimension of the representation $\rho$.

We will show that the representation $\rho$ we produce in fact contains a nonabelian free group in its image (see Corollary \ref{cor:free group2}, cf. \cite{FunarKohno}).

If $S'\to S$ is a finite covering space, we define the subspace \[H_1^{s}(S',\bZ)\subseteq H_1(S',\bZ)\] to be \emph{simple loop homology} of $S'$. Precisely, let $g\in\pi_1(S)$ be simple, and let $n(g)$ be the smallest positive integer such that $g^{n(g)}$ lifts to $S'$. Then \[H_1^{s}(S',\bZ):=\langle [g^{n(g)}]\mid g\in\pi_1(S) \textrm{ simple}\rangle\subseteq H_1(S',\bZ),\] where $[g^{n(g)}]$ denotes the homology class of $g^{n(g)}$ in $S'$. It is easy to check that $H_1^s(S,\bZ)=H_1(S,\bZ)$.

Identifying $\pi_1(S')$ with a subgroup of $\pi_1(S)$, we write $\pi_1^s(S')$ for the \emph{simple loop subgroup} of $\pi_1(S')$, which is generated by elements of the form $g^{n(g)}$. Here again, $g$ ranges over simple elements of $\pi_1(S)$. Observe that $H_1^s(S',\Z)$ is exactly the image of $\pi_1^s(S')$ inside of $H_1(S',\Z)$.

We obtain the following result as a corollary to Theorem \ref{thm:infinite image}:

\begin{thm}\label{thm:not gen homology}
Let $S= S_{g,n}$ be a genus $g$ surface with $n$ boundary components, excluding the $(g,n)$ pairs $\{(0,0), (0,1), (0,2), (1,0)\}$. Then there exists a finite cover $S'\to S$ such that \[H_1^s(S',\bZ)\subsetneq H_1(S',\bZ),\] i.e. the simple loop homology of $S'$ is properly contained in the full homology of $S'$.
\end{thm}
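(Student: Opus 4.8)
The plan is to deduce Theorem~\ref{thm:not gen homology} from Theorem~\ref{thm:infinite image} by a standard ``residual finiteness plus a nontrivial linear witness'' argument. Suppose $\rho\colon\pi_1(S)\to\GL_d(\C)$ is the representation supplied by Theorem~\ref{thm:infinite image}, so that $\rho$ has infinite image $G=\rho(\pi_1(S))$ while $\rho(g)$ has finite order whenever $g$ is simple. Finitely generated linear groups are residually finite (Malcev), so $G$ has a finite quotient $q\colon G\to Q$ in which the image of $G$ is still large; more to the point, I will choose $Q$ so that the composite $\bar\rho = q\circ\rho\colon\pi_1(S)\to Q$ still sends simple elements to finite-order (equivalently torsion) elements while retaining enough of the structure of $\rho$ to detect a homology obstruction. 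Let $S'\to S$ be the regular cover corresponding to $\ker\bar\rho$, a finite-index normal subgroup of $\pi_1(S)$; then $\pi_1(S')=\ker\bar\rho$ and the deck group is $Q$.

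The heart of the argument is then to show $H_1^s(S',\Z)\subsetneq H_1(S',\Z)$. I would do this by producing a homomorphism $\phi\colon H_1(S',\Z)\to A$ to some abelian group that kills every class coming from a lift of a simple curve but is not identically zero. Concretely: if $g\in\pi_1(S)$ is simple and $n(g)$ is minimal with $g^{n(g)}\in\pi_1(S')$, then $\bar\rho(g)$ has some finite order $m$, and minimality forces $n(g)\mid$ (or relates to) $m$ in such a way that $\bar\rho(g^{n(g)})=\bar\rho(g)^{n(g)}$ lies in a controlled subgroup of $Q$; in fact the key point is that $\bar\rho(g^{n(g)})$ is a \emph{torsion} element of $Q$, hence $\rho(g^{n(g)})$ is torsion in $G$. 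Now consider the abelianization: the image of the simple loop subgroup $\pi_1^s(S')$ inside $H_1(\ker\bar\rho,\Z)\cong H_1(\pi_1(S'),\Z)$ maps, under the abelianization of $\rho|_{\pi_1(S')}$ composed with projection to the torsion-free quotient of $G'=\rho(\pi_1(S'))$, entirely to torsion and hence to $0$. So if I can arrange that $H_1(S',\Z)$ does \emph{not} map entirely to torsion under this map — i.e.\ that $\rho(\pi_1(S'))$ has infinite abelianization, or at least that $\pi_1(S')$ has some homology class detected by $\rho$ that is not torsion in the image — then $H_1^s(S',\Z)$ misses that class and the containment is proper.

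Making that last arrangement is the main obstacle, and it is where I expect to have to work. It is not automatic that $\rho$ restricted to a finite-index subgroup has infinite abelianization even though its image is infinite (a perfect infinite group would defeat this). I see two routes. The first: use the promised fact (Corollary~\ref{cor:free group2}) that $\rho(\pi_1(S))$ contains a nonabelian free group $F$; pulling back, $\pi_1(S)$ contains an element (or a pair of elements) mapping to a free group, and one can try to choose the finite cover $S'$ — equivalently the finite quotient $Q$ — so that a chosen non-simple primitive class survives to infinite order in $\rho(\pi_1(S'))^{\mathrm{ab}}$ while all simple lifts remain torsion; since simple curves generate $H_1(S,\Z)$ but the kernel of ``being torsion under $\rho$'' is a proper condition on $\pi_1(S')$, a counting/covolume argument should force the simple-loop subgroup to be proper in $H_1(S',\Z)$. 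The second, cleaner route: observe that $H_1^s(S',\Z)$ is by construction contained in the kernel of the map $H_1(S',\Z)\to H_1(G',\Z)_{\mathrm{free}} = (G'^{\mathrm{ab}})\otimes\Q$ induced by $\rho$ followed by killing torsion, because every generator $g^{n(g)}$ of $\pi_1^s(S')$ has torsion image; hence it suffices to exhibit a single finite cover on which this rational map is nonzero, i.e.\ on which $\rho(\pi_1(S'))$ has positive first Betti number. Since $\rho$ has infinite image and $\pi_1(S)$ is a surface group (so every finite-index subgroup is again a surface group of large genus, with huge $b_1$), one expects — and I would prove, using residual finiteness to separate a suitable non-torsion commutator or loxodromic element of the image — that for an appropriate choice of $S'$ the composite has infinite image with nontrivial abelianization. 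Either way, the proper containment follows; additionally this setup feeds directly into the promised lower bound on the index $[H_1(S',\Z):H_1^s(S',\Z)]$, obtained by tracking the rank of the complementary part detected by $\rho$.
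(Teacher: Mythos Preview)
Your approach has a genuine gap at precisely the point you flag as ``the main obstacle'': you need $\rho(\pi_1(S'))$ to have infinite abelianization for some finite-index subgroup $\pi_1(S')$, and nothing in the hypotheses of Theorem~\ref{thm:infinite image} guarantees this. An infinite finitely generated linear group can perfectly well have every finite-index subgroup with finite abelianization --- lattices in higher-rank Lie groups such as $\mathrm{SL}_n(\Z)$ for $n\ge 3$ are standard examples --- and the presence of a nonabelian free subgroup in the image (Corollary~\ref{cor:free group2}) does not help, since such lattices contain plenty of free subgroups too. Neither of your two proposed routes actually closes this gap; ``one expects --- and I would prove'' is exactly where the argument is missing. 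In fact, if your argument went through it would establish the \emph{rational} statement $H_1^s(S',\Q)\subsetneq H_1(S',\Q)$, which the paper explicitly leaves open (see the paragraph following Corollary~\ref{cor:not gen pi1} and the discussion around Proposition~\ref{prop:homology quotient}).

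The paper's proof sidesteps this obstacle by exploiting a specific feature of the \emph{integral} TQFT representations rather than an arbitrary $\rho$ satisfying Theorem~\ref{thm:infinite image}. The representation is defined over $\Z[\zeta_p]$, and reducing modulo powers of the prime $h=1-\zeta_p$ gives a filtration $R_k=\ker\rho_{p,k}$ by finite-index normal subgroups with the congruence property $[R_N,R_N]\subset R_{2N+1}$. One picks $\psi$ of infinite order under $\rho_p$, chooses $N$ with $\psi\in R_N\setminus R_{N+1}$, and uses that the simple-loop subgroup $\mathcal{D}$ lies in $R=\bigcap_k R_k\subset R_{N+1}$. If the class of $\psi$ in $R_N/[R_N,R_N]$ lay in the image of $\mathcal{D}$, then $\psi\in\mathcal{D}\cdot[R_N,R_N]\subset R_{N+1}$, a contradiction. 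No information about the abelianization of the image of $\rho_p$ is ever invoked; the $h$-adic filtration does all the work that your missing ``positive first Betti number of the image'' step was supposed to do.
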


Again, Theorem \ref{thm:not gen homology} holds for all nonabelian surface groups, and in the abelian case the result obviously cannot hold. We remark that I. Irmer has proposed a version of Theorem \ref{thm:not gen homology} in \cite{Irmer}, and she proves that $H_1^s(S',\bZ)=H_1(S',\bZ)$ whenever the deck group $S'\to S$ is abelian.

We obtain the following immediate corollary from Theorem \ref{thm:not gen homology}:

\begin{cor}\label{cor:not gen pi1}
Let $S= S_{g,n}$ be a genus $g$ surface with $n$ boundary components, excluding the $(g,n)$ pairs $\{(0,0), (0,1), (0,2), (1,0)\}$. Then there exists a finite cover $S' \to S$ such that $\pi_1^s(S')$ is properly contained in $\pi_1(S')$.
\end{cor}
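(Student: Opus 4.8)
The plan is to deduce this directly from Theorem~\ref{thm:not gen homology} by passing to abelianizations. Let $S'\to S$ be the finite cover furnished by Theorem~\ref{thm:not gen homology}, so that $H_1^s(S',\bZ)\subsetneq H_1(S',\bZ)$. As observed in the discussion preceding Theorem~\ref{thm:not gen homology}, the subgroup $H_1^s(S',\bZ)$ is precisely the image of the simple loop subgroup $\pi_1^s(S')$ under the Hurewicz (abelianization) map $h\colon\pi_1(S')\to H_1(S',\bZ)$, and $h$ is surjective.

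Now suppose for contradiction that $\pi_1^s(S')=\pi_1(S')$. Applying $h$ and using its surjectivity, we would obtain
\[
H_1^s(S',\bZ)=h\bigl(\pi_1^s(S')\bigr)=h\bigl(\pi_1(S')\bigr)=H_1(S',\bZ),
\]
contradicting Theorem~\ref{thm:not gen homology}. Hence $\pi_1^s(S')\subsetneq\pi_1(S')$, which is the assertion of the corollary. Note that the same cover $S'$ serves for both statements.

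Since the argument is purely formal once Theorem~\ref{thm:not gen homology} is available, I do not anticipate any genuine obstacle. The only point worth double-checking is the identification of $H_1^s(S',\bZ)$ with $h(\pi_1^s(S'))$: by definition $H_1^s(S',\bZ)$ is generated by the classes $[g^{n(g)}]$ for $g\in\pi_1(S)$ simple, and each such class is exactly the image under $h$ of the corresponding generator $g^{n(g)}\in\pi_1^s(S')\subseteq\pi_1(S')$, so the two subgroups of $H_1(S',\bZ)$ coincide. One could also record that the construction gives no control on the index $[\pi_1(S'):\pi_1^s(S')]$ beyond the homological bound $[H_1(S',\bZ):H_1^s(S',\bZ)]$ coming from Theorem~\ref{thm:not gen homology} (and $\pi_1^s(S')$ need not be normal in $\pi_1(S')$), but this is immaterial for the stated containment.
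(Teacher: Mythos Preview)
Your argument is correct and is exactly the approach the paper takes: the corollary is stated as an ``immediate corollary'' of Theorem~\ref{thm:not gen homology}, using precisely the identification $H_1^s(S',\Z)=h(\pi_1^s(S'))$ noted in the introduction. No further proof is given in the paper, so your write-up is if anything more detailed than what appears there.
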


The representation $\rho$ in Theorem \ref{thm:infinite image} is produced using the quantum $\SO(3)$ representations of the mapping class group of $S$. We then use the Birman Exact Sequence to produce representations of the fundamental group of $S$. We use integral TQFT representations in order to approximate the representation $\rho$ by finite image representations $\{\rho_k\}_{k\in\N}$ which converge to $\rho$ in some suitable sense. Since the representations $\{\rho_k\}_{k\in\N}$ each have finite image, each such homomorphism classifies a finite cover $S_k\to S$. The cover $S'\to S$ furnished in Theorem \ref{thm:not gen homology} is any one of the covers $S_k\to S$, where $k\gg 0$.

Thus, the covers coming from integral TQFT representations will produce an infinite sequence of covers for which $H_1^s(S_k,\bZ)$ is a proper subgroup of $H_1(S_k,\bZ)$. It is unclear whether $H_1^s(S_k,\bZ)$ has finite or infinite index inside of $H_1(S_k,\bZ)$, or equivalently if Theorem \ref{thm:not gen homology} holds when integral coefficients are replaced by rational coefficients. However we will show that for a fixed $S$, the index of $H_1^s(S_k,\Z)$ in $H_1(S_k,\Z)$ can be arbitrarily large. I.~Agol has observed that Theorem \ref{thm:not gen homology} holds for the surface $S_{0,3}$ even when integral coefficients are replaced by rational coefficients (see \cite{KentMarche MO}). We will show that if Theorem \ref{thm:not gen homology} holds over $\Q$, then it in fact implies Theorem \ref{thm:infinite image}: see Proposition \ref{prop:homology quotient}.

\subsection{Notes and references}
The question of whether the homology of a regular finite cover $S'\to S$ is generated by pullbacks of simple closed curves on $S$ appears to be well--known though not well--documented in literature (see for instance \cite{KentMarche MO} and \cite{Looijenga}, and especially the footnote at the end of the latter). The problem itself is closely related to the congruence subgroup conjecture for mapping class groups (\cite{KentMarche MO}, \cite{Boggi1}, \cite{Boggi2}), to the virtually positive first Betti number problem for mapping class groups (see \cite{PutmanWieland}, and also \cite{FarbHensel} for a free group--oriented discussion), and to the study of arithmetic quotients of mapping class groups (see  \cite{grunewaldlubotzkymalestein}). It appears to have been resistant to various ``classical" approaches up to now.

The problem of finding an infinite image linear representation of a surface group in which simple closed curves have finite order has applications to certain arithmetic problems, and the question was posed to the authors by M. Kisin and C. McMullen (see also Questions 5 and 6 of \cite{Looijenga}). The existence of such a representation for the free group on two generators is an unpublished result of O. Gabber. Our work recovers Gabber's result, though our representation is somewhat different. In general, the locus of representations \[X_s\subset \mathcal{R}(\pi_1(S)),\GL_d(\C))\] of the representation variety of $\pi_1(S)$ which have infinite image but under which every simple closed curve on $\pi_1(S)$ has finite image is invariant under the action of $\text{Aut}(\pi_1(S))$. Theorem \ref{thm:infinite image} shows that this locus is nonempty, and it may have interesting dynamical properties. For instance, Kisin has asked whether any infinite image representation in $\mathcal{R}(\pi_1(S))$ has a finite orbit under the action of $\text{Aut}(\pi_1(S))$, up to conjugation by elements of $\pi_1(S)$. The locus $X_s$ is a good candidate in the search for such representations.

At least on a superficial level, Theorem \ref{thm:infinite image} is related to the generalized Burnside problem, i.e. whether or not there exist infinite, finitely generated, torsion groups. The classical Selberg's Lemma (see \cite{Raghunathan} for instance) implies that any finitely generated linear group has a finite index subgroup which is torsion--free, so that a finitely generated, torsion, linear group is finite. In the context of Theorem \ref{thm:infinite image}, we produce a finitely generated linear group which is not only generated by torsion elements (i.e. the images of finitely many simple closed curves), but the image of every element in the mapping class group orbit of these generators is torsion.

Finally, we note that G. Masbaum used explicit computations of TQFT representations to show that, under certain conditions, the image of quantum representations have an infinite order element (see \cite{Masbaum infinite}). This idea was generalized in \cite{AMU} where Andersen, Masbaum, and Ueno conjectured that a mapping class with a pseudo-Anosov component will have infinite image under a sufficiently deep level of the TQFT representations. In \cite{AMU}, the authors prove their conjecture for a four--times punctured sphere (cf. Theorem \ref{thm:sphere}). It turns out that their computation does not imply Theorem \ref{thm:sphere}, for rather technical reasons.

Although the papers \cite{Masbaum infinite} and \cite{AMU} do not imply our results, these computations of explicit mapping classes whose images under the TQFT representations are of infinite order is similar in spirit to our work in this paper.
 
 The reader may also consult the work of L. Funar (see \cite{Funar}) who proved independently (using methods different from those of Masbaum) that, under certain conditions, the images of quantum representations are infinite. Again, Funar's work does not imply our results.

\section{Acknowledgements}
The authors thank B. Farb, L. Funar, M. Kisin, V. Krushkal, E. Looijenga, J. March\'e, G. Masbaum, and C. McMullen for helpful conversations. The authors thank the hospitality of the Matematisches Forschungsinstitut Oberwolfach during the workshop ``New Perspectives on the Interplay between Discrete Groups in Low-Dimensional Topology and Arithmetic Lattices", where this research was initiated. The authors are grateful to an anonymous referee for a report which contained many helpful comments.

\section{Background}\label{sect:background}
In this section we give a very brief summary of facts we will require from the theory of TQFT representations of mapping class groups. We have included references for the reader to consult, but in the interest of brevity, we have kept the discussion here to a minimum.

\subsection{From representations of mapping class groups to representations of surface groups} \label{point pushing}
Let $S$ be an oriented surface with or without boundary and let $x_0\in S$ be a marked point, which we will assume lies in the interior of $S$. Recall that we can consider two mapping class groups, namely $\Mod(S)$ and $\Mod^1(S)=\Mod^1(S,x_0)$, the usual mapping class group of $S$ and the mapping class group of $S$ preserving the marked point $x_0$, respectively. By convention, we will require that mapping classes preserve $\partial S$ pointwise.

When the Euler characteristic of $S$ is strictly negative, these two mapping class groups are related by the Birman Exact Sequence (see \cite{Birman} or \cite{farbmargalit}, for instance): \[1\to\pi_1(S,x_0)\to\Mod^1(S)\to\Mod(S)\to 1.\] Thus, from any representation of $\Mod^1(S)$, we obtain a representation of $\pi_1(S)$ by restriction.  The subgroup $\pi_1(S)\cong\pi_1(S,x_0)<\Mod^1(S)$ is called the \emph{point--pushing subgroup} of $\Mod^1(S)$.

When $S$ has a boundary component $B$, one can consider the \emph{boundary--pushing subgroup} of $\Mod(S)$. There is a natural operation on $S$ which caps off the boundary component $B$ and replaces it with a marked point $b$, resulting in a surface $\hat{S}$ with one fewer boundary components and one marked point. There is thus a natural map $\Mod(S)\to\Mod^1(\hat{S},b)$, whose kernel is cyclic and generated by a Dehn twist about $B$. The boundary--pushing subgroup $\text{BP}(S)$ of $\Mod(S)$ is defined to be the preimage of the point--pushing subgroup of $\Mod^1(\hat{S},b)$. In general whenever $\hat{S}$ has negative Euler characteristic, we have an exact sequence \[1\to\Z\to\text{BP}(S)\to\pi_1(\hat{S},b)\to 1.\] The left copy of $\Z$ is central, and this extension is never split if $\hat{S}$ is closed and has negative Euler characteristic. In fact, $\text{BP}(S)$ is isomorphic to the fundamental group of the unit tangent bundle of $\hat{S}$. The reader is again referred to \cite{Birman} or \cite{farbmargalit} for more detail.

\begin{lem}\label{lem:simple finite}
Let $\rho\colon\Mod^1(S)\to Q$ be a quotient such that for each Dehn twist $T\in\Mod^1(S)$, we have $\rho(T)$ has finite order in $Q$. Then for every simple element $g$ in the point--pushing subgroup of $\Mod^1(S)$, we have that $\rho(g)$ has finite order in $Q$.
\end{lem}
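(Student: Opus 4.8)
The plan is to reduce the statement to a well-known fact about the point-pushing subgroup: the point-pushing map associated to a simple closed curve is expressible in terms of Dehn twists. Concretely, if $g \in \pi_1(S,x_0)$ is represented by a simple closed curve $\gamma$ based at $x_0$, then pushing the marked point $x_0$ around $\gamma$ has a standard description. First I would recall the relevant picture: a simple based loop $\gamma$ can be taken to the boundary of a regular neighborhood, which (since $\gamma$ is simple) is either an annulus or a once-punctured torus-type neighborhood depending on whether $\gamma$ is one- or two-sided; in the orientable setting we get an annular neighborhood whose two boundary components we call $\gamma_1$ and $\gamma_2$, lying on either side of $\gamma$. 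The classical formula (see \cite{farbmargalit}) states that the point-pushing mapping class $\mathrm{Push}(\gamma)$ equals $T_{\gamma_1} T_{\gamma_2}^{-1}$, the product of the Dehn twist about $\gamma_1$ and the inverse Dehn twist about $\gamma_2$.

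The key steps, in order, are: (1) identify the element $g$ with a simple based loop $\gamma$ and pass to the image of $g$ under the inclusion $\pi_1(S,x_0) \hookrightarrow \Mod^1(S)$, which is precisely $\mathrm{Push}(\gamma)$; (2) invoke the formula $\mathrm{Push}(\gamma) = T_{\gamma_1} T_{\gamma_2}^{-1}$ where $\gamma_1,\gamma_2$ are the boundary curves of an annular neighborhood of $\gamma$; (3) observe that since $\gamma$ is simple, both $\gamma_1$ and $\gamma_2$ are themselves simple closed curves on $S$, so $T_{\gamma_1}$ and $T_{\gamma_2}$ are honest Dehn twists in $\Mod^1(S)$; (4) apply the hypothesis that $\rho(T)$ has finite order for every Dehn twist $T$, so $\rho(T_{\gamma_1})$ and $\rho(T_{\gamma_2})$ have finite orders $m_1, m_2$; (5) conclude that $\rho(g) = \rho(T_{\gamma_1})\rho(T_{\gamma_2})^{-1}$ — but here one must be slightly careful, since a product of two finite-order elements need not have finite order in general. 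The resolution is that $\gamma_1$ and $\gamma_2$ are disjoint, hence $T_{\gamma_1}$ and $T_{\gamma_2}$ commute, so $\rho(T_{\gamma_1})$ and $\rho(T_{\gamma_2})$ commute in $Q$, and therefore $\rho(g)$ has order dividing $\mathrm{lcm}(m_1,m_2)$, which is finite.

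The main obstacle is step (5): the commutativity observation is what makes the argument work, and one needs to make sure the two boundary curves of the annular neighborhood are genuinely disjoint (they are, being the two components of the boundary of an embedded annulus) and that their Dehn twists commute in $\Mod^1(S)$ (disjoint simple closed curves have commuting Dehn twists — a standard fact, see \cite{farbmargalit}). A secondary point to handle carefully is the case where $g$ is freely homotopic to a boundary component of $S$: in our convention such $g$ are simple, and the point-pushing around a boundary-parallel curve is (up to the relevant twist) again a product of commuting boundary twists, so the same reasoning applies; alternatively, one notes that a boundary-parallel simple loop still has an annular neighborhood in the interior whose boundary curves are simple closed curves, and the formula goes through verbatim. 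There is no genuine difficulty here, just bookkeeping about neighborhoods and conventions on $\partial S$.
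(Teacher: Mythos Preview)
Your proposal is correct and follows essentially the same argument as the paper: both express the point--pushing map along a simple loop $\gamma$ as $T_{\gamma_1}T_{\gamma_2}^{-1}$ for the two disjoint curves $\gamma_1,\gamma_2$ bounding an annulus about $\gamma$ containing the marked point, observe that disjointness makes the twists commute, and conclude that the image has finite order. Your additional remarks on the boundary--parallel case and the explicit bound $\mathrm{lcm}(m_1,m_2)$ are fine elaborations but not substantively different from the paper's proof.
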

\begin{proof}
Let $\gamma$ be an oriented simple loop in $S$ based at $x_0$. Identifying $\gamma$ with a simple or boundary parallel element $g$ of the point--pushing subgroup of $\Mod^1(S)$, we can express $g$ as a product of two Dehn twists thus: let $\gamma_1,\gamma_2\subset S$ be parallel copies of the loop $\gamma$, separated by the marked point $x_0$ (i.e. one component of $S\setminus\{\gamma_1,\gamma_2\}$ is an annulus containing the marked point $x_0$). Then the point pushing map about $\gamma$ is given, up to a sign, by $g=T_{\gamma_1}T^{-1}_{\gamma_2}$. Since $\gamma_1$ and $\gamma_2$ are disjoint, the corresponding Dehn twists commute with each other. Since $\rho(T_{\gamma_i})$ has finite order for each $i$, the element $\rho(g)$ has finite order as well.
\end{proof}

We will see in the sequel that if $\rho$ is a TQFT representation of $\Mod^1(S)$, then by Theorem \ref{thm:dt finite} below, Lemma \ref{lem:simple finite} applies.

\subsection{$\text{SO}(3)$--TQFT representations}\label{subsect:su2 tqft}
Let $S_{g,n}$ be a genus $g$ closed, oriented surface with $n$ boundary components. The $\SO(3)$ topological quantum field theories (TQFTs) take as an input an odd integer $p \geq 3$ and a $2p^{th}$ primitive root of unity. As an output, they give a projective representation \[\rho_p \colon \Mod^1(S_{g,n})\to\PGL_{d}(\C),\] which moreover depends on certain coloring data (which will be specified later on), and where here the dimension $d$ depends the input data. The notion of a TQFT was introduced by Witten (see \cite{Witten}). His ideas were based on a physical interpretation of the Jones polynomial involving the Feynman path integral, and the geometric quantization of the $3$--dimensional Chern--Simons theory. The first rigorous construction of a TQFT was carried out by Reshetikhin and Turaev, using the category of semisimple representations of the universal enveloping algebra for the quantum Lie algebra $\mathrm{SL}(2)_q$ (see \cite{ReshetikhinTuraev} and \cite{Turaev}). We will work in the TQFT constructed by Blanchet, Habegger, Masbaum, and Vogel in~\cite{bhmv}, wherein an explicit representation associated to a TQFT is constructed using skein theory. Perhaps the most important feature of these representations is the following well--known fact (see~\cite{bhmv}):

\begin{thm}\label{thm:dt finite}
Let $T\in\Mod^1(S_{g,n})$ be a Dehn twist about a simple closed curve. Then $\rho_p(T)$ is a finite order element of $\PGL_{d}(\C)$.
\end{thm}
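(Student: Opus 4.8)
The plan is to read off the result from the skein-theoretic construction of the representation in \cite{bhmv}, where a Dehn twist about a pants curve acts diagonally with explicitly known eigenvalues, and then to reduce the case of an arbitrary simple closed curve to that of a pants curve by a conjugation argument.

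First I would fix a pants decomposition $P=\{\gamma_1,\dots,\gamma_N\}$ of $S_{g,n}$ together with a dual trivalent banded graph $G$ exhibiting $S_{g,n}$ as the boundary of a regular neighborhood of $G$; I may take $P$ disjoint from the marked point $x_0$, which (in the Birman exact sequence framework of Subsection~\ref{point pushing}) I regard as a colored marked point. The space $V_p(S_{g,n})$ underlying $\rho_p$ then carries a preferred basis indexed by the \emph{admissible colorings} of the edges of $G$ by colors from the finite color set determined by the level $p$, the edges being in bijection with the curves of $P$ while the boundary curves and $x_0$ carry fixed colors. The crucial structural fact, established in \cite{bhmv}, is that the Dehn twist $T_{\gamma_i}$ about a curve $\gamma_i\in P$ acts \emph{diagonally} in this basis: it multiplies a basis vector whose color on $\gamma_i$ is $c$ by the twist coefficient $\mu_c$, which in a suitable normalization equals $(-A)^{c^2+2c}$ with $A$ the chosen $2p$-th primitive root of unity (up to an overall framing-anomaly scalar $\kappa_p$, itself a root of unity). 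Since $A$ is a root of unity and there are only finitely many admissible colors, it follows at once that $\rho_p(T_{\gamma_i})$ has finite order, in fact already in $\GL_d(\C)$ and a fortiori in $\PGL_d(\C)$.

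To treat a general simple closed curve $\gamma\subset S_{g,n}$, I would invoke the change-of-coordinates principle for mapping class groups (see \cite{farbmargalit}): after isotoping $\gamma$ off $x_0$, the curve $\gamma$ has the same topological type as some curve $\gamma_i$ lying in a pants decomposition --- either by arranging $P$ to contain a representative of each of the finitely many topological types, or simply by choosing a fresh pants decomposition containing $\gamma$ itself, since any two pants decompositions yield isomorphic $\Mod$-representations. Hence there is $f\in\Mod^1(S_{g,n})$ with $f(\gamma_i)=\gamma$, so $T_\gamma=f\,T_{\gamma_i}\,f^{-1}$ and therefore $\rho_p(T_\gamma)=\rho_p(f)\,\rho_p(T_{\gamma_i})\,\rho_p(f)^{-1}$ has the same, finite, order as $\rho_p(T_{\gamma_i})$.

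I do not expect a genuine obstacle here: the entire substantive input --- the construction of the colored basis of $V_p(S_{g,n})$ and the diagonal action of pants-curve twists with the twist coefficients as eigenvalues --- is exactly what is carried out in \cite{bhmv}, and the statement is being recorded only for later use through Lemma~\ref{lem:simple finite}. The sole points deserving a word of care are the bookkeeping for the several topological types of separating curves and the role of the marked point of $\Mod^1$, and neither of these introduces any real difficulty.
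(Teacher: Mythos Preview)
The paper does not actually prove this theorem: it is recorded as a ``well--known fact (see~\cite{bhmv})'' and then used as a black box, chiefly through Lemma~\ref{lem:simple finite}. Your sketch is the standard argument one extracts from \cite{bhmv} and is correct: in the admissible-coloring basis associated to a pants decomposition, the Dehn twist along a pants curve acts diagonally, multiplying each basis vector by a power of the chosen $2p^{th}$ root of unity $A_p$ (up to the anomaly scalar, itself a root of unity), and hence has finite order already in $\GL_d(\C)$.

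One small comment on the reduction step. Invoking change of coordinates in $\Mod^1(S_{g,n})$ is slightly more delicate than in $\Mod(S_{g,n})$, since two curves of the same unmarked topological type may lie in different $\Mod^1$-orbits depending on how they separate the marked point $x_0$. You do not actually need this: it is cleaner simply to isotope the given curve $\gamma$ off $x_0$, extend $\gamma$ to a pants decomposition of $(S_{g,n},x_0)$, and apply the diagonalization statement directly in the basis attached to \emph{that} pants decomposition. This bypasses any orbit bookkeeping and gives the conclusion immediately.
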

It is verifying that certain mapping classes have infinite order under TQFT representations which is often nontrivial and makes up most of the content of this paper. Here and in Subsection \ref{subsect:integral tqft} we will survey some basic properties and computational methods for TQFT representations which we will require.

One can define a certain cobordism category $\mC$ of closed surfaces with colored banded points, in which the cobordisms are decorated by uni-trivalent colored banded graphs. The details of this category are not essential to our discussion; for details we direct the reader to~\cite{bhmv}. The $\SO(3)$--TQFT is a functor $Z_p$ from the category $\mC$ to the category of finite dimensional vector spaces over $\C$.

A \emph{banded point} (or an \emph{ribbon point}) on a closed oriented surface is an oriented submanifold  which is homeomorphic to the unit interval. If a surface has multiple banded points, we will assume that these intervals are disjoint.
 A banded point provides a good substitute for a boundary component within a closed surface, and a simple loop on $S$ which encloses a single banded point can be thought of a boundary parallel loop. When one wants to study a surface with boundary from the point of view of TQFTs, one customarily attaches a disk to each boundary component and places a single banded point in the interior of each such disk. The banded points are moreover colored, which is to say equipped with an integer.
 
 By capping off boundary components, we can start with a surface $S_{g,n}$ and produce a closed surface $\hat{S}_{g,n}$ equipped with $n$ colored banded points. We will include a further colored banded point $x$ in the interior of the surface with boundary $S_{g,n}$, which will play the role of a basepoint. We denote by $(\hat{S}_{g,n},x)$ the closed surface with $n+1$ colored banded points thus obtained.

 Now for $p \geq 3$ odd, the $\SO(3)$--TQFT defines a finite dimensional vector space $$V_{p}(\hat{S}_{g,n},x).$$
For the sake of computations, it is useful to write down an explicit basis for the space $V_{p}(\hat{S}_{g,n},x)$. 

Denote by $y$ the set of $n+1$ colored banded points on $(\hat{S}_{g,n},x)$, and by $S_g$ the underlying closed surface without colored banded points. Let $\mathcal{H}$ be a handlebody such that $\partial{\mathcal{H}} = S_g$, and let $G$ be a uni-trivalent banded graph such that $\mathcal{H}$ retracts to $G$. We suppose that $G$ meets the boundary of $\mathcal{H}$ exactly at the banded points $y$ and this intersection consists exactly of the degree one ends of $G$. A $p$--admissible coloring of $G$ is a coloring, i.e. an assignment of an integer, to each edge of $G$ such that at each degree three vertex $v$ of $G$, the three (non--negative integer) colors $\{a,b,c\}$ coloring edges meeting at $v$ satisfy the following conditions:
\begin{enumerate}
\item
$|a-c|\leq b\leq a+c$;
\item
$a+b+c\leq 2p-4$;
\item
each color lies between $0$ and $p-2$;
\item
the color of an edge terminating at a banded point $y_i$ must have the same color as $y_i$.
\end{enumerate}
To any $p$-admissible coloring $c$ of $G$, there is a canonical way to associate an element of the skein module \[S_{A_p}(\mathcal{H},(\hat{S}_{g,n},x)),\] where here the notation refers to the usual skein module with the indeterminate evaluated at $A_p$, which in turn is a $2p^{th}$ primitive root of unity. The skein module element is produced by cabling the edges of $G$ by appropriate Jones-Wenzl idempotents (see \cite[Section 4]{bhmv} for more detail). If moreover all the colors are required to be even, it turns out that the vectors associated to $p$--admissible colorings give a basis for $V_p(\hat{S}_{g,n},x)$ (see \cite[Theorem 4.14]{bhmv}).

We can now coarsely sketch the construction of TQFT representations. If $x_0 \in x$, we may contract $x$ down to a point in order to obtain a surface with a marked point $(S_{g,n},x_0)$. To construct this representation, one takes a mapping class $f\in\Mod^1(S_{g,n},x_0)$ and one considers the mapping cylinder of $f^{-1}$. The TQFT functor gives as an output a linear automorphism $\rho_p(f)$ of $V_{p}(\hat{S}_{g,n},x)$.

This procedure gives us a projective representation $$ \rho_p : \Mod^1(S_{g,n},x_0) \to \text{PAut}(V_p(S_{g,n},x)),$$ since the composition law is well--defined only up to multiplication by a root of unity.

Our primary interest in the representation $\rho_p$ lies in its restriction to the point pushing subgroup of $\Mod^1(S_{g,n},x_0)$. More precisely, we need to compute explicitly the action of a given element of $\pi_1(S_{g,n},x_0)$ on the basis of $V_{p}(\hat{S}_{g,n},x)$ described above. We will explain how to do these calculations in Subsection \ref{Q rep surface group}, and some illustrative examples will be given in the proof of Theorem \ref{thm:sphere}.

\subsection{Integral TQFT representations}\label{subsect:integral tqft}
In this Subsection we follow some of the introductory material of~\cite{GilmerMasbaum} and of~\cite{GilmerMasbaumTorus}. The TQFT representations of mapping class groups discussed in Subsection \ref{subsect:su2 tqft} are defined over $\C$ and may not have good integrality properties. In our proof of Theorem \ref{thm:not gen homology}, we will require an ``integral refinement" of the TQFT representations which was constructed by Gilmer~\cite{Gilmer} and Gilmer--Masbaum~\cite{GilmerMasbaum}. These integral TQFT representations have all the properties of general $\SO(3)$--TQFTs which we will require, and in particular Theorem \ref{thm:infinite image} holds for them.

To an oriented closed surface $S$ equipped with finitely many colored banded points and an odd prime $p$ (with $p \equiv 3 \, \pmod 4$) , one can associate a free, finitely generated module over the ring of integers $\mathcal{O}_p=\Z[\zeta_p]$, where $\zeta_p$ is a primitive $p^{th}$ root of unity. We will denote this module by $\mathcal{S}_p(S)$. This module is stable by the action of the mapping class group, and moreover tensoring this action with $\C$ gives us the usual $\SO(3)$--TQFT representation.

Most of the intricacies of the construction and the properties of the integral TQFT representations are irrelevant for our purposes, and we therefore direct the reader to the references mentioned above for more detail. We will briefly remark that the construction of the integral TQFT representations is performed using the skein module, so that the computations in integral TQFT are identical to those in the TQFTs described in Subsection \ref{subsect:su2 tqft}. The feature of these representations which we will require is the following filtration by finite image representations.

Let $h=1-\zeta_p$, which is a prime in $\Z[\zeta_p]$. We consider the modules \[\mathcal{S}_{p,k}(S)=\mathcal{S}_p(S)/h^{k+1}\mathcal{S}_p(S),\] which are finite abelian groups for each $k \geq 0$. The representation $\rho_p$ has a natural action on $\mathcal{S}_{p,k}(S)$, and we denote the corresponding representation of the mapping class group by $\rho_{p,k}$. 

Observe (see \cite{GilmerMasbaumTorus}) that the natural map \[\mathcal{O}_p\to\varprojlim \mathcal{O}_p/h^{k+1}\mathcal{O}_p\] is injective, where here the right hand side is sometimes called the \emph{$h$--adic completion} of $\mathcal{O}_p$. We immediately see that \[\bigcap_k \ker{\rho_{p,k}} = \ker{\rho_p}.\]

\section{Infinite image TQFT representations of surface groups}

\subsection{Quantum representations of surface groups}\label{Q rep surface group}
In this Subsection we describe the key idea of this paper, which is the use of the Birman Exact Sequence (see Subsection \ref{point pushing}) together with TQFT representations in order to produce exotic representations of surface groups. Using the notation of Subsection \ref{subsect:su2 tqft}, we have a projective representation \[ \rho_p : \Mod^1({S}_{g,n},x_0) \to \text{PAut}(V_{p}(\hat{S}_{g,n},x)).\]

Restriction to the point--pushing subgroup gives a projective representation of the fundamental group of $S_{g,n}$:  $$ \rho_p : \pi_1({S}_{g,n},x_0) \to \text{PAut}(V_{p}(\hat{S}_{g,n},x)),$$ which is defined whenever the Euler characteristic of $S_{g,n}$ is strictly negative. In order to make the computations more tractable, let us describe this action more precisely. Let \[ \gamma : [0,1] \to S_{g,n}\]  be a loop based at $x_0$. By the Birman Exact Sequence, $\gamma$ can be seen as a diffeomorphism $f_{\gamma}$ of $(S_{g,n},x_0)$. Pick a lift $\tilde{f}_{\gamma}$ of $f_{\gamma}$ to the \emph{ribbon mapping class group} of $(S_{g,n},x)$ , i.e. the mapping class group preserving the orientation on the banded point $x$, together with any coloring data that might be present. Note that any two lifts of $f_{\gamma}$ differ by a twist about the banded point $x$, which is a central element of the ribbon mapping class group. The preimage of the point pushing subgroup inside of the ribbon mapping class group is easily seen to be isomorphic to the boundary pushing subgroup $\text{BP}(S_{g,{n+1}})$, where the extra boundary component is the boundary of a small neighborhood of the banded point $x$.

By definition, $\tilde{f}_{\gamma}^{-1}$ is isotopic to the identity via an isotopy which is allowed to move $x$. Following the trajectory of the colored banded point $x$ in this isotopy gives a colored banded tangle $\tilde{\gamma}$ inside $ S_g \times [0,1] $. Observe that $\tilde{\gamma}$ is just a thickening of the tangle $t \in [0,1] \mapsto (\gamma(1-t),t) \in S_{g,n} \times [0,1]$.

We can naturally form the decorated cobordism $C_{\tilde{\gamma}}$ by considering $S_g \times [0,1]$ equipped with the colored banded tangle $$(\tilde{\gamma},a_1 \times [0,1],\ldots, a_{n} \times [0,1]),$$ where $(a_1,\ldots,a_n)$ are the $n$ colored banded points on $\hat{S}_{g,n}$. By applying the TQFT functor, we obtain an automorphism \[ \rho_p(\gamma)  := \rho_p(\tilde{f}_{\gamma}) = Z_p(C_{\tilde{\gamma}}). \]

We crucially note that this automorphism generally depends on the choice of the lift $\tilde{f}_{\gamma}$, but that changing the lift will only result in $\rho_p(\gamma)$ being multiplied by a root of unity, since the Dehn twist about $x$ acts by multiplication by a root of unity. In particular, different choices of lift give rise to the same element $\text{PAut}(V_p(S_{g,n},x))$. Thus, we do indeed obtain a representation of $\pi_1(S_{g,n})$, and not of the boundary pushing subgroup.

We now briefly describe how to compute the action of the loop $\gamma$ on the basis of TQFT described in Subsection 
\ref{subsect:su2 tqft}. Let $\mathcal{H}$ be a handlebody with $\partial{\mathcal{H}} = S_g$ and let $G$ be a uni-trivalent banded graph such that $\mathcal{H}$ retracts to $G$. Let $G_c$ be a coloring of this graph as explained in Subsection \ref{subsect:su2 tqft}. In the TQFT language, applying $Z_p(C_{\tilde{\gamma}})$ to $G_c$ simply means that we glue the cobordism $C_{\tilde{\gamma}}$ to the handlebody $\mathcal{H}$ equipped with $G_c$. This gluing operation gives the same handlebody $\mathcal{H}$, but with a different colored banded graph inside. To express this new banded colored graph in terms of the basis mentioned above, we have to use a set of local relations on colored banded graphs, namely a colored version of the Kauffman bracket and the application of Jones--Wenzl idempotents. For the sake of brevity, we refer the reader to the formulas computed by Masbaum and Vogel (see \cite{MasbaumVogel}).

\subsection{The three-holed sphere}\label{subsect:sphere}
The main technical case needed to establish Theorem \ref{thm:infinite image} is the case of the surface $S_{0,3}$, a sphere with three boundary components. Let $\Mod^1(S_{0,3})$ be the mapping class group of $S_{0,3}$, preserving a fourth marked point in the interior of $S_{0,3}$.

The correct setup for TQFT representations of $\Mod^1(S_{0,3})$ is that of spheres with banded points. Let $(S^2,1,1,2,2)$ be a sphere equipped with two banded points colored by $1$ and two banded points colored by $2$ (where one of these two serves as the base point). Let $p \geq 5$ and $A_p$ be a $2p^{th}$ primitive root of unity. According to \cite[Theorem 4.14]{bhmv} and the survey given in Subsection \ref{subsect:su2 tqft}, the vector space $V_{p}(S^2,1,1,2,2)$ has a basis described by the following two colored banded graphs
\begin{equation} \label{eq1} G_a   = \begin{minipage}[c]{4cm}
\includegraphics[scale = 0.14]{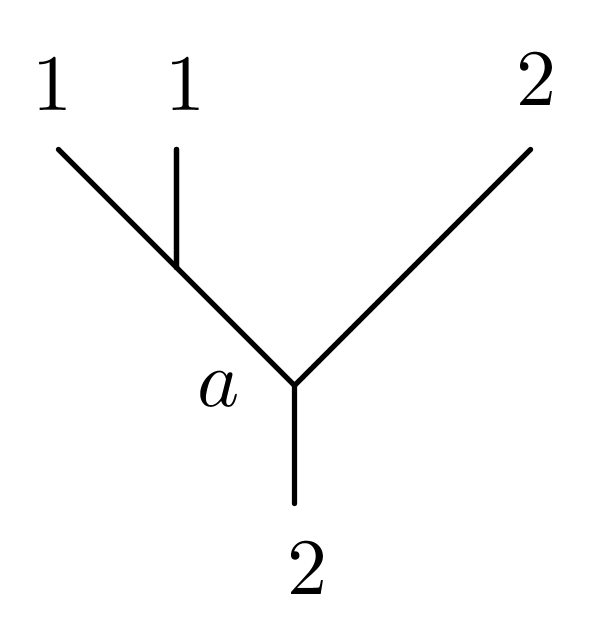}
\end{minipage} \text{with} \quad a \in \{ 0,2 \}, \end{equation}
where these graphs live in the $3$--ball whose boundary is $(S^2,1,1,2,2)$. The computations can be done in this basis as described in Subsection \ref{Q rep surface group}. In order to simplify the computations, we will work in a different basis. We note that these colored banded graphs can be expanded to banded tangles in the $3$-ball using a well--studied procedure (see \cite{MasbaumVogel} or \cite[section 4]{bhmv}). It is easy to check that the following elements form a basis of $V_p(S^2,1,1,2,2)$:
\[ u_1   = \begin{minipage}[c]{3cm}
\includegraphics[scale = 0.16]{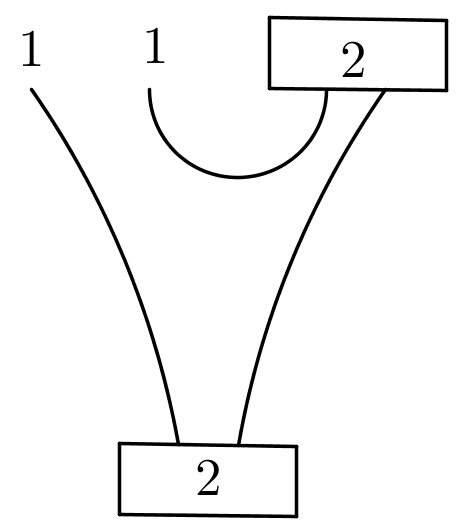}
\end{minipage}  \quad u_2  =  \begin{minipage}[c]{3.5cm}
\includegraphics[scale = 0.16]{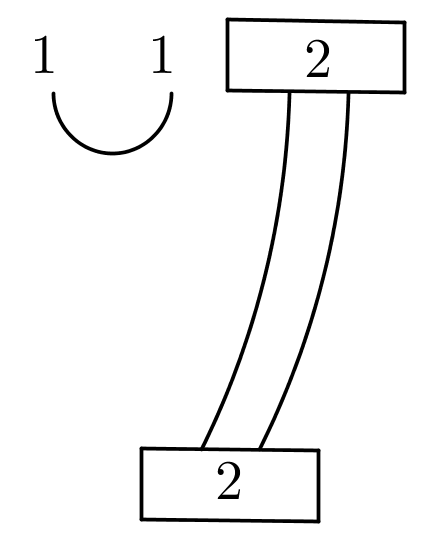}
\end{minipage} \]
More precisely, \[ u_1 = \dfrac{G_0}{A_p^2+A_p^{-2}}+G_2 \quad \text{and} \quad u_2 = G_0. \]
Here again, the ambient $3$--ball in which these tangles live is not drawn. The arcs drawn stand for banded arcs with the blackboard framing. The two end points of the arcs in the top left of the picture are attached to the points colored by $1$ on the boundary of the $3$--ball. The two rectangles labeled by $2$ represent the second Jones--Wenzl idempotent, and these are attached to the two points colored by $2$ on the boundary of the $3$--ball. The construction and most properties of the Jones--Wenzl idempotents are irrelevant for our purposes, and the interested reader is directed to \cite[Section 3]{bhmv}.

The TQFT representation of $\Mod^1(S_{0,3})$ furnishes a homomorphism \[\rho_p \colon \Mod^1(S_{0,3})\to\text{PAut}( V_{p}(S^2,1,1,2,2))\cong \PGL_2(\C)\] in whose image all Dehn twists have finite order by Theorem \ref{thm:dt finite}, which then by restriction gives us a homomorphism \[\rho_p\colon\pi_1(S_{0,3})\to\PGL_2(\C)\] in whose image all simple loops have finite order, by Lemma \ref{lem:simple finite}.

\begin{thm}\label{thm:sphere}
For all $p\gg 0$, the image of the representation \[\rho_p\colon\pi_1(S_{0,3})\to\PGL_2(\C)\] contains an element of infinite order.
\end{thm}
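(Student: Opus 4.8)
The plan is to work entirely inside the $2$-dimensional space $V_p(S^2,1,1,2,2) \cong \PGL_2(\C)$ and to compute explicitly the matrices (up to scalar) representing a generating set of $\pi_1(S_{0,3})$. Since $\pi_1(S_{0,3})$ is free on two generators $x_1, x_2$ (corresponding to loops around two of the three boundary components), it suffices to exhibit a single word $w \in \pi_1(S_{0,3})$ whose image $\rho_p(w)$ has infinite order in $\PGL_2(\C)$, for all sufficiently large odd $p$. The natural candidates are loops around the boundary components and simple commutator-type words; but since simple loops necessarily map to finite order elements (by Lemma \ref{lem:simple finite}), the word must be a genuinely non-simple element of the free group, for instance a product of point-pushes that fills the surface.

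First I would set up the computation concretely. Using the basis $\{u_1, u_2\}$ introduced above and the Masbaum--Vogel skein-theoretic formulas (\cite{MasbaumVogel}), I would compute the action of the point-pushing maps $\rho_p(\gamma_i)$ on $u_1, u_2$, where $\gamma_1, \gamma_2$ are loops encircling the two banded points colored by $1$ (or by $1$ and $2$). Following the recipe of Subsection \ref{Q rep surface group}, this amounts to gluing the mapping cylinder cobordism $C_{\tilde{\gamma}_i}$ onto the $3$-ball containing the colored banded graph, then re-expanding the resulting tangle into the basis via the Kauffman bracket relation and the Jones--Wenzl idempotent identities. The outcome is a pair of explicit $2\times 2$ matrices over $\Q(A_p)$, well-defined up to scalar. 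I would then form a suitable word $w = w(\gamma_1, \gamma_2)$ — chosen so that the loop is not freely homotopic to a simple closed curve — and compute $\rho_p(w)$ as an explicit matrix whose entries are Laurent polynomials in $A_p$.

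The final step is to verify that $\rho_p(w)$ has infinite order in $\PGL_2(\C)$ for $p \gg 0$. An element of $\PGL_2(\C)$ has finite order if and only if the ratio of its eigenvalues is a root of unity, equivalently if and only if $\left(\operatorname{tr}\right)^2/\det$ takes one of the special values $4\cos^2(\pi k/m)$. So I would compute $t_p := \operatorname{tr}(\rho_p(w))^2/\det(\rho_p(w))$ as an explicit algebraic function of $A_p$, and argue that as $p \to \infty$ (so that $A_p \to 1$, or to another limiting root of unity depending on the normalization), $t_p$ converges to a limit $t_\infty$ which is \emph{not} of the form $4\cos^2(\pi q)$ for rational $q$ — for instance, $t_\infty$ could be transcendental, or a specific algebraic number (like an integer outside $[0,4)$) that is visibly not twice a sum of conjugate roots of unity. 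Since the set of ``finite-order traces'' is discrete near any such $t_\infty$, this forces $\rho_p(w)$ to have infinite order once $p$ is large enough. An alternative to the limiting argument, if the polynomial in $A_p$ is clean enough, is a direct Galois-theoretic or number-theoretic argument showing $t_p$ is never a finite-order trace for any large $p$.

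The main obstacle I anticipate is the explicit skein computation of $\rho_p(\gamma_i)$: one must carefully track framings, signs (the point-push $g = T_{\gamma_1}T_{\gamma_2}^{-1}$ is only defined up to sign, and here up to an overall root of unity), and the exact Masbaum--Vogel expansion formulas in the mixed-color setting (colors $1$ and $2$ on a sphere). Getting the matrices right — in a form simple enough to extract the asymptotics of $t_p$ — is the delicate part; once the matrices are in hand, the infinite-order conclusion should follow from a relatively soft limiting argument as above. A secondary subtlety is choosing the word $w$ correctly: it must lie outside the set of simple elements (otherwise Lemma \ref{lem:simple finite} obstructs infinite order), and it should be chosen to make the trace computation and its $p \to \infty$ limit as transparent as possible.
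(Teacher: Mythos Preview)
Your proposal is correct and matches the paper's approach almost exactly: explicit skein computation of the matrices $\rho_p(\gamma_i)$ in the basis $\{u_1,u_2\}$, followed by a trace--limit argument for a non-simple word. The paper carries this out with the specific word $w=\gamma_1\gamma_2^{-1}$, uses the slightly simpler criterion $|\operatorname{tr}\rho_p(w)|>2$ (legitimate since $\det\rho_p(w)$ is a root of unity), and sends $A_p\to e^{i\pi/6}$ to obtain the limiting trace value $5$.
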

\begin{proof}
We can compute the action of $\pi_1(S_{0,3})$ on $ V_{p}(S^2,1,1,2,2)$ explicitly, and thus find an element of $g\in\pi_1(S_{0,3})$ such that $\rho_p(g)$ has infinite order.

Let $\{\gamma_1,\gamma_2,\gamma_3\}$ be the usual generators of the fundamental group of the three--holed sphere: 
\[\begin{minipage}[c]{4cm}
\includegraphics[scale = 0.13]{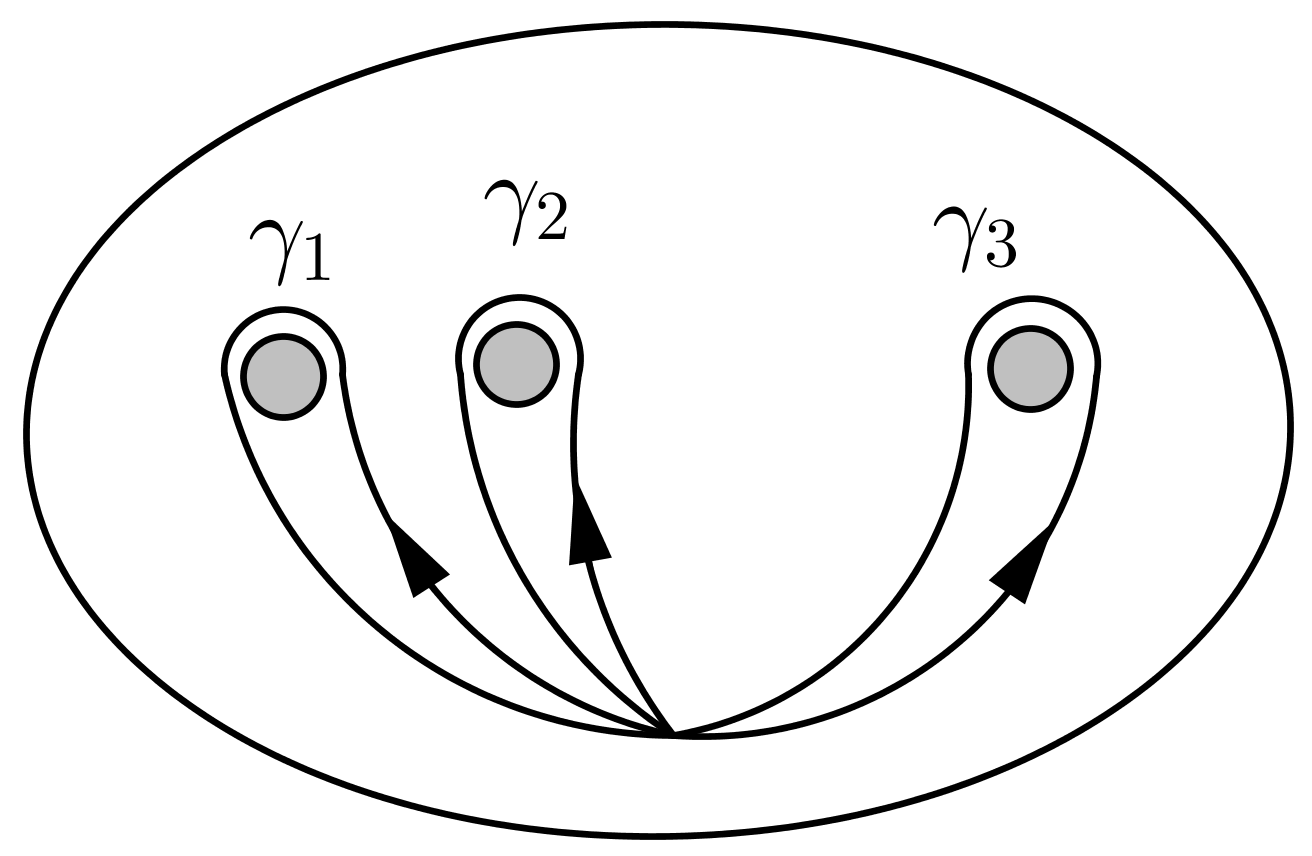}
\end{minipage}\]

Following the discussion in Subsection \ref{Q rep surface group}, we can write down matrices for the action of $\rho(\gamma_i)$ for each $i$. A graphical representation is as follows:
 \[\rho_p(\gamma_1) u_2 \quad = \begin{minipage}[c]{3.5cm}
\includegraphics[scale = 0.15]{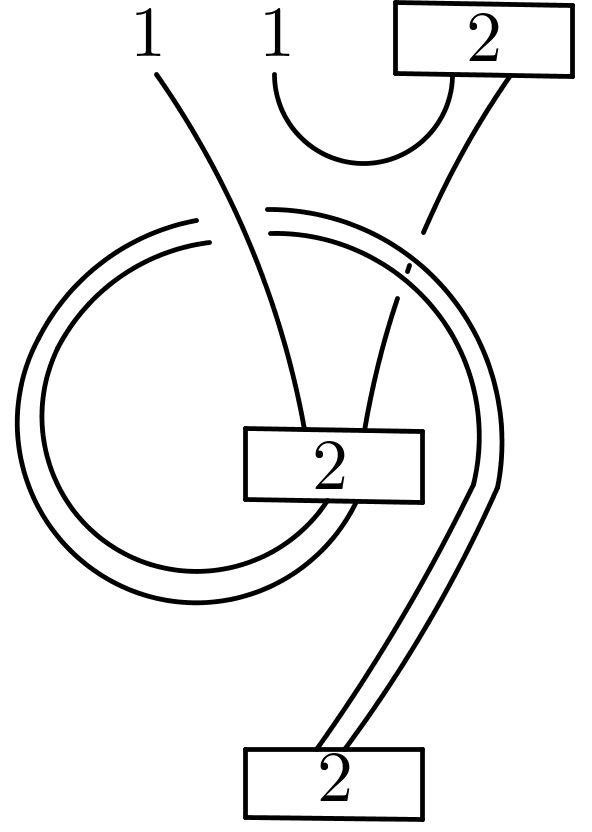}
\end{minipage} \quad  \rho_p(\gamma_1) u_2 \quad = \begin{minipage}[c]{3.5cm}
\includegraphics[scale = 0.15]{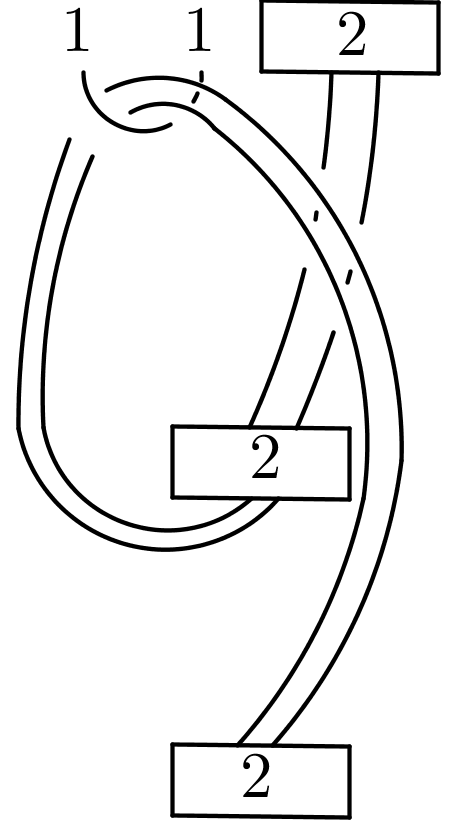}
\end{minipage}\] 
 \[\rho_p(\gamma_2) u_1 \quad = \begin{minipage}[c]{3.5cm}
\includegraphics[scale = 0.15]{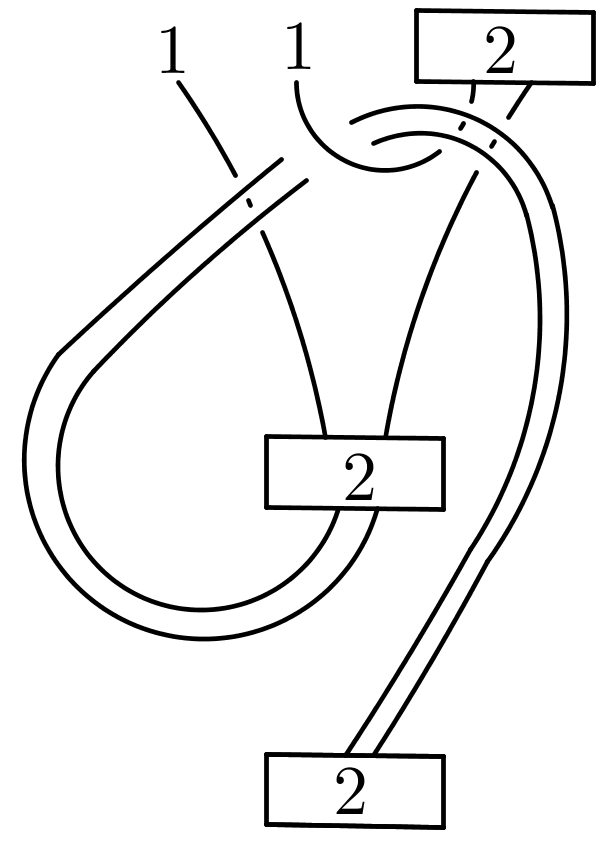}
\end{minipage} \quad  \rho_p(\gamma_2) u_2 \quad = \begin{minipage}[c]{3.5cm}
\includegraphics[scale = 0.15]{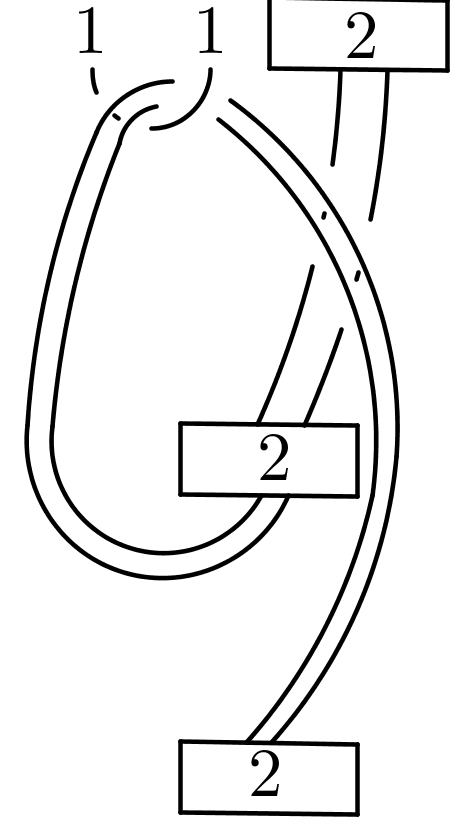}
\end{minipage}\]We can then reduce these diagrams using the skein relations \begin{align} \begin{minipage}[c]{1cm}
\includegraphics[scale = 0.09]{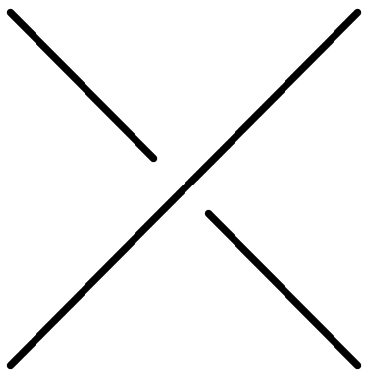} \end{minipage}&= A_p \quad  \begin{minipage}[c]{1cm}
\includegraphics[scale = 0.09]{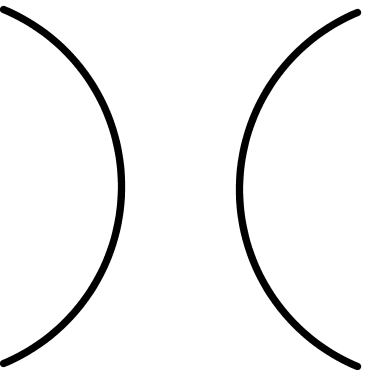} \end{minipage} + A^{-1}_p \quad \begin{minipage}[c]{1cm}
\includegraphics[scale = 0.09]{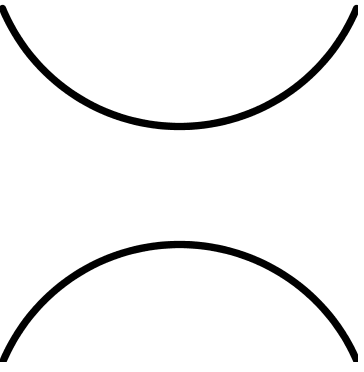} \end{minipage} \notag \\
\begin{minipage}[c]{1cm}
\includegraphics[scale = 0.06]{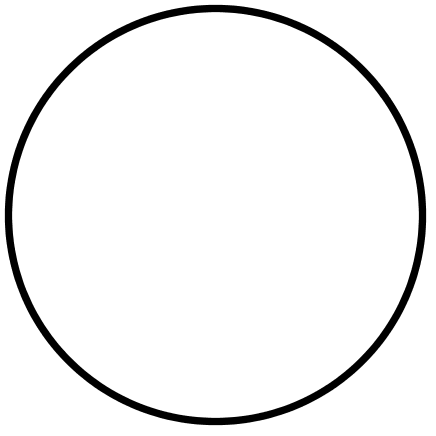} \end{minipage} & = -A^2_p-A^{-2}_p \notag 
\end{align}
 in order to obtain diagrams without crossing and without trivial circles. We then use the Jones--Wenzl idempotents, and in particular the rule 
\[ \begin{minipage}[c]{1cm}
\includegraphics[scale = 0.18]{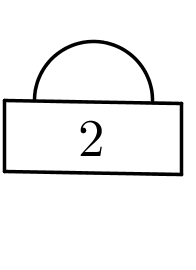}
\end{minipage}  = 0,\]
in order to simplify the diagrams further. We thus obtain for each diagram a linear combination of the tangles $\{u_1,u_2\}$. One easily checks that we have the following matrices:
\[ \rho_p(\gamma_1)  = \begin{pmatrix}
       1 & A_p^{-10}-A_p^{-2} \\
 0 & A_p^{-12}
\end{pmatrix} \quad  \rho_p(\gamma_2) = \begin{pmatrix}
    A_p^{-8} & A_p^{2}-A_p^{-6} \\
 A_p^{-10}-A_p^{-14} & 1-A_p^{-8}+A_p^{-12}
\end{pmatrix}. \] \\
Similarly, we can compute \[\rho_p(\gamma_3)  = \begin{pmatrix}
  A_p^{-8} & 0 \\
  -A_p^{-2}+A_p^{-6} & 1 \\
\end{pmatrix}.\]
\\

Here we recall that $p \geq 5$ is an odd integer and $A_p$ is $2p^{th}$ primitive root of unity.
Now, one checks that $ \text{tr}(\rho_p(\gamma_1) \rho_p(\gamma_2)^{-1}) = A_p^{12}-A_p^{4}+2-A_p^{-4}+A_p^{-12}$. So we have that \[| \text{tr}(\rho_p(\gamma_1 \gamma_2^{-1}))|  \underset{A_p \to e^{\frac{i \pi}{6}}}{\longrightarrow} 5 > 2.\] If we take a sequence of $2p^{th}$ primitive roots of unity $\{A_{p}\}$ such that $A_{p}\to e^{\frac{i \pi}{6}}$ as $p\to\infty$, we see that  $| \text{tr}(\rho_{p}(\gamma_1 \gamma_2^{-1}))| >2$ for $p \gg 0$. Whenever $| \text{tr}(\rho_{p}(\gamma_1 \gamma_2^{-1}))| >2$, an elementary calculation shows that $\rho_{p}(\gamma_1 \gamma_2^{-1})$ has an eigenvalue which lies off the unit circle. It follows that no power of $\rho_{p}(\gamma_1 \gamma_2^{-1})$ is a scalar matrix, since the determinant of this matrix is itself a root of unity. Thus, $\rho_{p}(\gamma_1 \gamma_2^{-1})$ has infinite order for $p \gg 0$.
\end{proof}

In the proof above, we note that the based loop $\gamma_1\gamma_2^{-1}$ is freely homotopic to a ``figure eight" which encircles two punctures, and such a loop is not freely homotopic to a simple loop. In light of the discussion above, Theorem \ref{thm:sphere} could be viewed as giving another proof that $\gamma_1\gamma_2^{-1}$ is in fact not represented by a simple loop.

\begin{cor}\label{cor:free group}
Let $\rho_p$ be as above. For $p\gg 0$, the image of $\rho_p$ contains a nonabelian free group.
\end{cor}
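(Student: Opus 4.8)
The plan is to prove that the image $\Gamma:=\rho_p(\pi_1(S_{0,3}))$ (and hence, a fortiori, the full image of $\rho_p$ on $\Mod^1(S_{0,3})$) is a \emph{non-elementary} subgroup of $\PGL_2(\C)\cong\mathrm{PSL}_2(\C)$, acting as a group of Möbius transformations on the Riemann sphere $\C\cup\{\infty\}$, and then to invoke the standard dichotomy for such groups: a finitely generated non-elementary subgroup of $\PGL_2(\C)$ contains a nonabelian free group. First I would record that, by Theorem \ref{thm:sphere} and its proof, for $p\gg 0$ the element $A:=\rho_p(\gamma_1\gamma_2^{-1})$ has infinite order and has an eigenvalue off the unit circle; as a Möbius transformation $A$ is therefore loxodromic, with a two-point fixed set $F=\{z_+,z_-\}$ on $\C\cup\{\infty\}$ (the endpoints of its axis).

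The heart of the argument is to exhibit one element of $\Gamma$ that does not stabilize $F$ as a set, which forces $\Gamma$ to be non-elementary (an elementary group containing the loxodromic $A$ must fix $F$, setwise). Here I would use the explicit matrices computed above. The matrix $\rho_p(\gamma_1)=\begin{pmatrix} 1 & A_p^{-10}-A_p^{-2}\\ 0 & A_p^{-12}\end{pmatrix}$ is upper triangular with distinct eigenvalues for $p\gg 0$, hence fixes $\infty$ and exactly one other point, and these are its only fixed points. On the other hand $\rho_p(\gamma_2)$, hence $\rho_p(\gamma_2)^{-1}$, has nonzero lower-left entry, so $A=\rho_p(\gamma_1)\rho_p(\gamma_2)^{-1}$ has nonzero lower-left entry and does not fix $\infty$; thus $\infty\notin F$. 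A short case check then rules out that $\rho_p(\gamma_1)$ preserves $F$: it cannot fix both points of $F$ (it would then have the same fixed set as $A$, impossible since $\infty$ is fixed by $\rho_p(\gamma_1)$ but not by $A$), and it cannot swap them (a swap would make the fixed set of $\rho_p(\gamma_1)^2$, which coincides with that of $\rho_p(\gamma_1)$, equal to $F$, again forcing $\infty\in F$). Equivalently, one checks directly from the matrices that neither $\rho_p(\gamma_1)$ nor $\rho_p(\gamma_1)^2$ commutes with $A$.

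To finish, I would conclude by either of two standard routes. One may note that a non-elementary subgroup of $\PGL_2(\C)$ contains two loxodromic elements with disjoint axes, and then apply the ping-pong lemma to sufficiently high powers of $A$ and of a conjugate $gAg^{-1}$ with axis disjoint from $F$ (such $g$ exists by non-elementarity) to produce a rank-two free subgroup; alternatively, since $\Gamma$ is finitely generated and linear, the Tits alternative gives that $\Gamma$ is either virtually solvable or contains a nonabelian free group, and a virtually solvable subgroup of $\PGL_2(\C)$ containing a loxodromic element is elementary, which we have excluded. The only real work is in the middle step, and even that is an elementary computation given the matrices already in hand; the one subtlety worth flagging is that one must exclude the possibility that $\rho_p(\gamma_1)$ \emph{swaps} the two fixed points of $A$, not merely the possibility that it fixes each of them.
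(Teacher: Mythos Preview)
Your argument is correct and follows essentially the same route as the paper: both take the loxodromic element $\rho_p(\gamma_1\gamma_2^{-1})$ from Theorem~\ref{thm:sphere}, show that $\rho_p(\gamma_1)$ does not preserve its two-point fixed set on $\hat{\C}$, and conclude via ping-pong on a conjugate. The paper verifies non-preservation by checking directly that $\lambda$ fixes neither $\infty$ nor the other fixed point $z_0$ of $\rho_p(\gamma_1)$, whereas you check only $\infty\notin F$ and then dispose of the fix/swap cases separately; your treatment of the swap case (via $\rho_p(\gamma_1)^2$) is in fact more explicit than the paper's, and your alternative finish through the Tits alternative is a mild variation the paper does not mention.
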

\begin{proof}
Let $g=\gamma_1\gamma_2^{-1}$ as in the proof of Theorem \ref{thm:sphere}, and let $p\gg0$ be chosen so that $\rho_p(g)$ has infinite order. We have shown that $\rho_p(g)$ in fact admits an eigenvalue which does not lie on the unit circle, so that $\lambda=\rho_p(g)\in\PGL_2(\C)$ can be viewed as a loxodromic isometry of hyperbolic $3$--space. By a standard Ping--Pong Lemma argument, it suffices to show that there exists a loxodromic isometry $\mu$ in the image of $\rho_p$ whose fixed point set on the Riemann sphere $\hat{\C}$ is disjoint from that of $\lambda$. Indeed, then sufficiently high powers of $\lambda$ and $\mu$ will generate a free subgroup of $\PGL_2(\C)$, which will in fact be a classical Schottky subgroup of $\PGL_2(\C)$. See \cite{delaHarpe}, for instance.

To produce $\mu$, one can just conjugate $\lambda$ by an element in the image of $\rho_p$. It is easy to check that for $p\gg0$, the element $\rho_p(\gamma_1)\in\PGL_2(\C)$ has two fixed points, namely $\infty$ and the point \[z_0=\frac{A_p^{-2}-A_p^{-10}}{1-A_p^{-12}},\] which is distinct from infinity if $A_p$ is not a twelfth root of unity. An easy computation using \[\lambda= \begin{pmatrix}
  A_p^{24}-A_p^4+2-A_p^{-4}-A_p^{-8}+A_p^{-12} & -A_p^{14}+A_p^6-A_p^2+A_p^{-6} \\
  -A_p^{-10}+A_p^{-14} & A_p^{-8} \\
\end{pmatrix}\] shows that $\lambda$ does not fix $\infty$. Similarly, a direct computation shows that $\lambda$ does not fix $z_0$. If $p\gg0$, the isometry $\rho_p(\gamma_1)$ will not preserve the fixed point set of $\lambda$ setwise, so that we can then conjugate $\lambda$ by a power of $\rho_p(\gamma_1)$ in order to get the desired $\mu$.
\end{proof}

\subsection{General surfaces}\label{subsect:surface}
In this Subsection, we bootstrap Theorem \ref{thm:sphere} in order to establish the main case of Theorem \ref{thm:infinite image}. 

\begin{proof}[Proof of Theorem \ref{thm:infinite image}]
Retaining the notation of Subsection \ref{Q rep surface group}, we start with the data of a surface $S_{g,n}$ which is equipped with a marked point $x_0$ in its interior. We then cap off the boundary components  to get $(\hat{S}_{g,n},x)$, which is a closed surface with $n+1$ colored banded points. Recall that $x$ is a thickening of the marked point $x_0$. Suppose furthermore that $x$ is colored by $2$.

 Restricting the quantum representation of $\Mod^1(S_{g,n},x_0)$ to the point pushing subgroup gives a representation  \[ \rho_p : \pi_1(S_{g,n},x_0) \to \text{PAut}(V_p(\hat{S}_{g,n},x)) \] Proving that $\rho_p$ has infinite image can be done by combining Theorem \ref{thm:sphere} with a standard TQFT argument which has already appeared in \cite{Masbaum infinite}. For the sake of concreteness, we now give this argument in the case $g \geq 2$, and the other cases covered by Theorem \ref{thm:infinite image} can be obtained in a similar fashion. We adopt the standing assumption that all banded points on $\hat{S}_{g,n}$ are colored by $2$.
 
 Consider the three-holed sphere inside $S_{g,n}$ whose boundary components are the two curves drawn in the following diagram: 
\[ \begin{minipage}[c]{3cm}
\includegraphics[scale = 0.14]{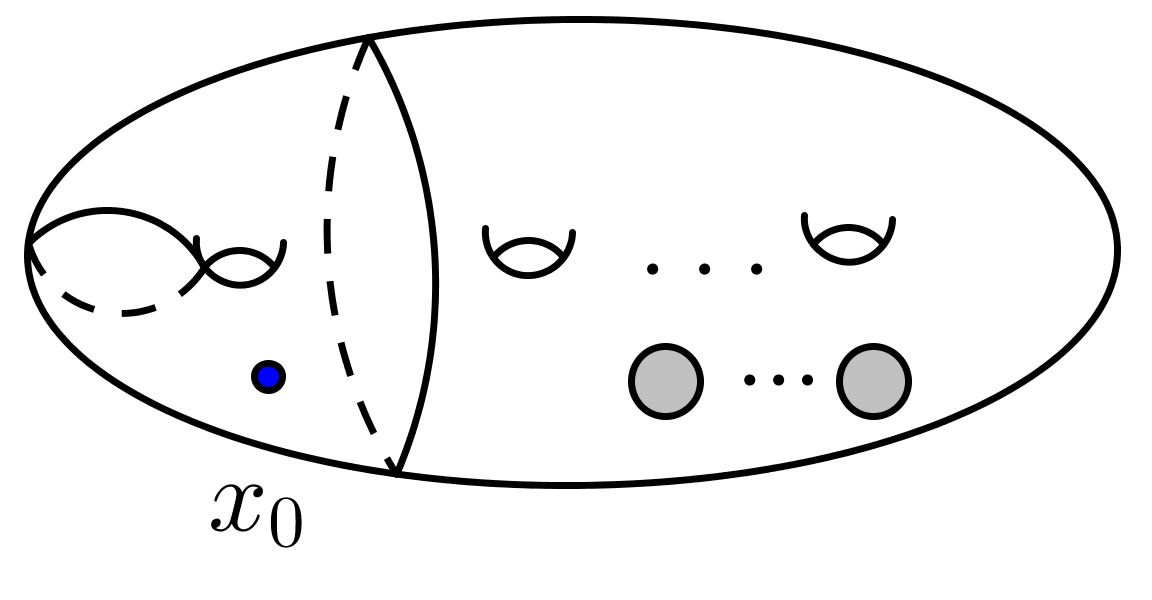}\end{minipage}\]
Thus, we can map $\pi_1(S_{0,3})$ into $\pi_1(S_{g,n})$ and get an action of $\pi_1(S_{0,3})$ on $V_p(\hat{S}_{g,n},x)$. From Theorem \cite[Theorem 1.14]{bhmv}, we see that this action of $\pi_1(S_{0,3})$ on $V_p(\hat{S}_{g,n},x)$ contains as a direct summand a vector space $V \otimes W$, where $\pi_1(S_{0,3})$ acts on $V$ as on $V_p(S^2,1,1,2,2)$ as discussed in Subsection \ref{subsect:sphere} and where $W$ is another representation. The conclusion of the theorem follows from observation that some element of $\pi_1(S_{0,3})$ acts with infinite order on \[V=V_p(S^2,1,1,2,2)\] for $p \gg 0$ by Theorem \ref{thm:sphere}, and thus this element also acts with infinite order on $V\otimes W$.
\end{proof}

We briefly remark that, as mentioned in Subsection \ref{subsect:integral tqft}, Theorem \ref{thm:infinite image} also holds for integral TQFT representations, with the same proof carrying over. Thus we have:
\begin{cor}\label{cor:integral infinite}
Let $p$ be an odd prime, let $\rho_p$ be the associated integral TQFT representation of $\Mod^1(S)$, and let $\pi_1(S)<\Mod^1(S)$ be the point--pushing subgroup. Then for all $p\gg 0$, we have that:
\begin{enumerate}
\item
The representation $\rho_p$ has infinite image.
\item
If $g\in\pi_1(S)$ is simple or boundary parallel then $\rho_p(g)$ has finite order.
\end{enumerate}
\end{cor}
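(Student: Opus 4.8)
The plan is to observe that the entire argument establishing Theorem \ref{thm:infinite image} — that is, the chain running from Theorem \ref{thm:sphere} through the bootstrapping argument in Subsection \ref{subsect:surface} — takes place at the level of skein modules and the explicit diagrammatic calculus of Masbaum--Vogel, and therefore is insensitive to whether one works with the $\C$-linear TQFT of Subsection \ref{subsect:su2 tqft} or the integral refinement $\mathcal{S}_p(S)$ of Gilmer--Masbaum recalled in Subsection \ref{subsect:integral tqft}. Concretely, I would first recall that the integral representation $\rho_p$ on $\mathcal{S}_p(S)$ becomes the ordinary $\SO(3)$-TQFT representation after tensoring with $\C$ over $\mathcal{O}_p = \Z[\zeta_p]$; hence the image of the integral representation surjects onto (indeed is isomorphic to a full-rank sublattice stabilizer of) the image of the complex representation, and in particular an element acting with infinite order on $\mathcal{S}_p(S)\otimes\C$ already acts with infinite order on $\mathcal{S}_p(S)$.

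Second, for part (2), I would note that Theorem \ref{thm:dt finite} (finiteness of the order of $\rho_p(T)$ for a Dehn twist $T$) is itself a statement proved in \cite{bhmv} using the skein-theoretic description, and it passes verbatim to the integral setting because the action of a Dehn twist on the skein module is by the same explicit formula (coloring the core curve by Jones--Wenzl idempotents and applying the twist coefficients, which are roots of unity in $\mathcal{O}_p$). With Theorem \ref{thm:dt finite} in hand for the integral representation, Lemma \ref{lem:simple finite} applies without change — its proof only uses that a point-pushing map about a simple loop is a product of two commuting Dehn twists $T_{\gamma_1}T_{\gamma_2}^{-1}$, and that the images of these commuting finite-order elements generate a finite abelian group. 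The same remark handles boundary-parallel elements, since a loop encircling a single banded point is boundary-parallel and its point-push is again a product of two commuting Dehn twists.

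Third, for part (1), I would invoke Theorem \ref{thm:infinite image} directly: the element $g = \gamma_1\gamma_2^{-1}\in\pi_1(S_{0,3})$ (pushed into $\pi_1(S_{g,n})$ via the embedded three-holed sphere, exactly as in the proof of Theorem \ref{thm:infinite image}) acts with infinite order on the direct summand $V\otimes W$ of $V_p(\hat S_{g,n},x)$, and by the remark in the previous paragraph the same element acts with infinite order on the integral module $\mathcal{S}_p(S)$; hence $\rho_p$ has infinite image for $p\gg 0$. The only genuine point to check is the compatibility statement — that the ``figure-eight trace'' computation of the proof of Theorem \ref{thm:sphere}, together with the splitting furnished by \cite[Theorem 1.14]{bhmv}, survives the passage to $\mathcal{O}_p$-coefficients — and this is immediate since every matrix entry appearing there already lies in $\Z[A_p]\subseteq\mathcal{O}_p$. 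Thus the only mild obstacle is bookkeeping: making sure the marked-point/banded-point conventions and the choice of lift in the ribbon mapping class group are stated so that the infinite-order element, the finite-order Dehn twists, and the module $\mathcal{S}_p(S)$ all refer to the same representation; once that is fixed there is nothing further to prove.
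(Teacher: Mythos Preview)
Your proposal is correct and follows exactly the approach the paper takes: the paper's ``proof'' of Corollary \ref{cor:integral infinite} is simply the remark preceding the statement that ``Theorem \ref{thm:infinite image} also holds for integral TQFT representations, with the same proof carrying over,'' together with the observation in Subsection \ref{subsect:integral tqft} that the integral module becomes the complex TQFT space after tensoring with $\C$ and that the skein computations are identical. You have merely spelled out in detail what ``the same proof carries over'' means, which is entirely appropriate.
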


A direct consequence of Corollary \ref{cor:free group} and the splitting argument in TQFT used in the proof of Theorem \ref{thm:infinite image} is the following fact:

\begin{cor}\label{cor:free group2}
The image of the representation \[ \rho_p : \pi_1(S_{g,n}) \to \text{PAut}(V_p(\hat{S}_{g,n},x)) \] contains a nonabelian free group for $p \gg 0$ as soon as $\pi_1(S_{g,n})$ is not abelian.

\end{cor}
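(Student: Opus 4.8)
The plan is to combine the two-generator case from Corollary \ref{cor:free group} with the same tensor-splitting mechanism already used in the proof of Theorem \ref{thm:infinite image}. Concretely, assume $\pi_1(S_{g,n})$ is nonabelian, so $(g,n)$ avoids the four excluded pairs. As in the proof of Theorem \ref{thm:infinite image}, I would embed a three-holed sphere $S_{0,3}$ into $S_{g,n}$ whose boundary curves are essential (and not boundary parallel) in $S_{g,n}$, inducing a map $\pi_1(S_{0,3})\to\pi_1(S_{g,n})$. By \cite[Theorem 1.14]{bhmv}, the restriction of $\rho_p$ to this copy of $\pi_1(S_{0,3})$ decomposes $V_p(\hat{S}_{g,n},x)$ as a direct sum of subrepresentations, at least one summand of which has the form $V\otimes W$ where $\pi_1(S_{0,3})$ acts on $V=V_p(S^2,1,1,2,2)$ exactly as in Subsection \ref{subsect:sphere}, and acts trivially on the second tensor factor $W$ (the action only seeing the colors on the boundary of the embedded $S_{0,3}$).

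Next I would invoke Corollary \ref{cor:free group}: for $p\gg 0$ there are elements $a,b\in\pi_1(S_{0,3})$ (built from $\gamma_1,\gamma_2$, e.g. a high power of $\lambda=\rho_p(\gamma_1\gamma_2^{-1})$ and a conjugate $\mu$) whose images generate a nonabelian free subgroup of $\PGL_2(\C)$ acting on $V$. The point is that the action on $V\otimes W$ is just the action on $V$ tensored with the identity on $W$, so the map $\PGL(V)\to\PAut(V\otimes W)$ sending $A\mapsto A\otimes\mathrm{Id}$ is injective on the relevant subgroup — indeed if $A\otimes\mathrm{Id}$ is scalar then $A$ is scalar. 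Hence the images of $a$ and $b$ in $\text{PAut}(V\otimes W)$, and therefore in $\text{PAut}(V_p(\hat{S}_{g,n},x))$ (since $V\otimes W$ is a direct summand and these elements act as the identity on a complement), still generate a free group. This gives the conclusion for $g\ge 2$; the remaining cases $(0,n)$ with $n\ge 3$ and $(1,n)$ with $n\ge 1$ are handled identically by choosing the embedded $S_{0,3}$ appropriately, exactly as indicated in the proof of Theorem \ref{thm:infinite image}.

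The one subtlety — and the step I expect to require the most care — is the passage from "free subgroup of $\PGL_2(\C)$ acting on $V$" to "free subgroup of $\text{PAut}(V_p(\hat{S}_{g,n},x))$". One must check that the free-ness is not destroyed by the other direct summands in the BHMV decomposition: a priori some relation in the free group on $\{a,b\}$ could be killed on the $V\otimes W$ summand only to reappear, or rather one needs that no nontrivial word maps to a scalar on the \emph{whole} space. But this is automatic: a word $w(a,b)$ acts on $V\otimes W$ as $w(\bar a,\bar b)\otimes\mathrm{Id}$ where $\bar a,\bar b$ generate a free group in $\PGL(V)$, so $w(\bar a,\bar b)$ is non-scalar for $w\ne 1$, hence $w(a,b)$ is non-scalar on $V\otimes W$ and a fortiori on $V_p(\hat S_{g,n},x)$. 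Thus the subgroup generated by $\rho_p(a),\rho_p(b)$ in $\text{PAut}(V_p(\hat S_{g,n},x))$ is free of rank two, as desired. I would also remark that, as in Corollary \ref{cor:integral infinite}, the same argument applies verbatim to the integral TQFT representations.
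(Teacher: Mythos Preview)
Your proposal is correct and matches the paper's approach exactly: the paper states the corollary as ``a direct consequence of Corollary \ref{cor:free group} and the splitting argument in TQFT used in the proof of Theorem \ref{thm:infinite image}'', and you have correctly fleshed out precisely these two ingredients, including the observation that non-scalarity on the $V$ factor forces non-scalarity on $V\otimes W$ and hence on the whole space. Your remark that the point-pushing action is trivial on the $W$ factor is also correct (the relevant mapping classes are supported in the embedded three-holed sphere), though as you implicitly note the argument would survive even without this.
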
 

\subsection{Dimensions of the representations}\label{subsect:dimension}
In this subsection, we give a quick and coarse estimate on the dimension of the representation $\rho$ in Theorem \ref{thm:infinite image}. It suffices to estimate the dimension of the TQFT representation $\rho_p$ of $\Mod^1(S)$ which restricts to an infinite image representation of $\pi_1(S)$, and then embed the corresponding projective general linear group in a general linear group.

The dimension of the space $V_{p}(S,x)$ is given by a \emph{Verlinde formula}, which simply counts the number of even $p$-admissible colorings of trivalent ribbon graphs in a handlebody bounded by $S$, as sketched in Subsection \ref{subsect:su2 tqft}. In principle, it is possible to compute the dimension of $V_{p}(S,x)$, though a closed formula is often quite complicated. See~\cite{bhmv} and ~\cite{GilmerMasbaum} for instance. 

We note that, fixing $p$, the dimension of $V_{p}(S,x)$ grows exponentially in the genus of $S$, since in the proof of Theorem \ref{thm:infinite image} we have that the color assigned to each boundary component of $S$ is $2$.

The argument for Theorem \ref{thm:infinite image} furnishes one $p$ which works for all genera, since the infinitude of the image of $\rho_p$ is proven by considering the restriction of $\rho_p$ to a certain three--holed sphere inside of $S$. It follows that the target dimension of \[\rho_p\colon \pi_1(S)\to\PGL_d(\C)\] is $d\sim C^g$ for some constant $C>1$. Since $\PGL_d(\C)$ can be embedded in $\GL_{d^2}(\C)$ using the adjoint action, we obtain the following consequence:

\begin{cor}\label{cor:dimension}
There is a constant $C>1$ such that if $S=S_{g}$ is a closed surface of genus $g$, there is a representation $\rho$ of $\pi_1(S)$ satisfying the conclusions of Theorem \ref{thm:infinite image}, of dimension bounded by $C^g$.
\end{cor}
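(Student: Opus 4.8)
The plan is to assemble the observations already made in Subsection~\ref{subsect:dimension} into a single uniform estimate. First I would fix, once and for all, an odd integer $p$ (one may take $p$ an odd prime with $p\equiv 3\pmod 4$ if one prefers to work with the integral TQFT of Corollary~\ref{cor:integral infinite}) for which the conclusion of Theorem~\ref{thm:infinite image} holds. This is legitimate precisely because, in the proof of Theorem~\ref{thm:infinite image}, the infinitude of the image of $\rho_p$ is detected inside a fixed three--holed subsurface of $S_g$, so a single $p$ works for every genus. With $p$ fixed, it remains only to bound $d=\dim_\C V_p(\hat S_g,x)$, where $x$ is the banded basepoint, which we color by $2$.

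Second, I would invoke the Verlinde description of this dimension recalled in Subsection~\ref{subsect:dimension}: choose a handlebody $\mathcal H$ with $\partial\mathcal H=S_g$ and a trivalent banded spine $G\subset\mathcal H$ carrying one univalent leg terminating at $x$. Such a spine can be chosen with a number of edges linear in $g$ (a trivalent spine of a genus $g$ handlebody has $3g-3$ edges, and inserting the leg for the banded point adds a bounded number more). Since $\dim_\C V_p(\hat S_g,x)$ equals the number of even $p$--admissible colorings of $G$ with the leg colored $2$, and since each color is an even integer between $0$ and $p-2$, each edge admits at most $\lceil(p-1)/2\rceil$ values. Hence $d$ is at most $C_0^{\,g}$, where $C_0=C_0(p)$ depends only on $p$ (one can take $C_0=\lceil(p-1)/2\rceil^{3}$, absorbing the additive constant in the exponent), and $C_0>1$.

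Third, Theorem~\ref{thm:infinite image} gives a projective representation $\rho_p\colon\pi_1(S_g)\to\mathrm{PAut}(V_p(\hat S_g,x))\cong\PGL_d(\C)$ with infinite image in which every simple element acts with finite order. Composing with the faithful adjoint embedding $\PGL_d(\C)\hookrightarrow\GL_{d^2}(\C)$ preserves both properties and produces a genuine linear representation $\rho$ of $\pi_1(S_g)$ satisfying the conclusions of Theorem~\ref{thm:infinite image}, of dimension $d^2\le C_0^{\,2g}$. Taking $C=C_0^{\,2}>1$ completes the argument.

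I expect the only point requiring care --- rather than a genuine obstacle --- to be the combinatorial bookkeeping in the second step: checking that a trivalent spine of a genus $g$ handlebody, together with the banded--point leg, has a number of edges that is linear in $g$ with an explicit constant, and that the admissibility conditions only reduce, never enlarge, the crude per--edge count, so that the bound $d\le C_0^{\,g}$ holds with $C_0$ depending on $p$ alone. Everything else is a direct quotation of facts already in hand: Theorem~\ref{thm:infinite image}, the genus--independence of the working level $p$, and the standard embedding $\PGL_d(\C)\hookrightarrow\GL_{d^2}(\C)$.
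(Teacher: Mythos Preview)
Your proposal is correct and follows essentially the same approach as the paper: fix a single level $p$ that works for all genera (via the three--holed subsurface argument), bound $\dim V_p$ exponentially in $g$ by counting admissible colorings of a trivalent spine, and then pass from $\PGL_d(\C)$ to $\GL_{d^2}(\C)$ via the adjoint embedding. You have simply made explicit the edge count and the constant $C_0$ that the paper leaves implicit.
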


\section{Homology and finite covers}
In this section we use integral TQFT representations to prove Theorem \ref{thm:not gen homology} and Corollary \ref{cor:not gen pi1}.

\subsection{From infinite image representations to finite covers} 
 Before using integral TQFT to establish Theorem \ref{thm:not gen homology} we show how from any projective representation $\rho : \pi_1(S) \to \PGL_d(\C)$ satisfying the conclusions of Theorem \ref{thm:infinite image} we can build a covering $S' \to S$ satisfying the conclusions of Corollary \ref{cor:not gen pi1}. Thus, any representation $\rho : \pi_1(S) \to \PGL_d(\C)$ satisfying the conclusions of Theorem \ref{thm:infinite image}, even one not coming from TQFTs, already gives a somewhat counterintuitive result. More precisely:
 \begin{thm}
 Let $\rho : \pi_1(S) \to \PGL_d(\C)$ be a projective representation such that 
 \begin{enumerate}
\item The image of $\rho$ is infinite.
\item If $g \in \pi_1(S)$ is simple then $\rho(g)$ has finite order.
\end{enumerate} Then there exists a finite cover $S' \to S$ such that $\pi_1^s(S')$ is an infinite index subgroup of $\pi_1(S')$. 
\end{thm}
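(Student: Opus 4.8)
The plan is to combine Selberg's Lemma with the fact that $\PGL_d(\C)$ is itself a linear group. Write $\Gamma := \rho(\pi_1(S)) < \PGL_d(\C)$. Since $\pi_1(S)$ is finitely generated (it is free when $n\geq 1$, a closed surface group when $n=0$) and $\PGL_d(\C)$ embeds into $\GL_{d^2}(\C)$ via the adjoint action, $\Gamma$ is a finitely generated linear group, and by hypothesis (1) it is infinite. First I would apply Selberg's Lemma (see \cite{Raghunathan}) to obtain a finite--index, torsion--free normal subgroup $\Gamma_0 \trianglelefteq \Gamma$.

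Next I would set $N := \rho^{-1}(\Gamma_0) < \pi_1(S)$, a finite--index normal subgroup; it corresponds to a regular finite cover $S' \to S$ with $\pi_1(S')$ identified with $N$. The key observation is that $\pi_1^s(S') \subseteq \ker\rho$. Recall that $\pi_1^s(S')$ is generated by the elements $g^{n(g)}$, where $g$ ranges over simple elements of $\pi_1(S)$ and $n(g)$ is the least positive integer with $g^{n(g)} \in \pi_1(S') = N$. For such $g$, hypothesis (2) gives that $\rho(g)$ has finite order, so $\rho(g^{n(g)}) = \rho(g)^{n(g)}$ is a torsion element of $\Gamma$; but by construction it lies in $\Gamma_0$, which is torsion--free, hence $\rho(g^{n(g)}) = 1$. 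Since this holds for every generator, $\pi_1^s(S') \subseteq \ker\rho$.

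Finally I would finish with a counting argument. As $\rho$ surjects onto $\Gamma$ and $N = \rho^{-1}(\Gamma_0)$, restriction gives a surjection $N \twoheadrightarrow \Gamma_0$ with kernel $\ker\rho$, so $[\pi_1(S') : \ker\rho] = |\Gamma_0|$, which is infinite because $\Gamma_0$ has finite index in the infinite group $\Gamma$. Combined with $\pi_1^s(S') \subseteq \ker\rho \subseteq N = \pi_1(S')$, this yields $[\pi_1(S') : \pi_1^s(S')] \geq [\pi_1(S') : \ker\rho] = \infty$, as desired.

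There is no serious obstacle here; every step is soft. The only points requiring care are the passage from the projective representation to a genuine linear one so that Selberg's Lemma applies, and the bookkeeping that $n(g)$ is precisely the exponent placing $g^{n(g)}$ inside $\pi_1(S') = N$ — which is exactly what lets us conclude $\rho(g^{n(g)}) \in \Gamma_0$ and then kill it using torsion--freeness of $\Gamma_0$.
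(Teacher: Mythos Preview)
Your argument is correct and is essentially identical to the paper's own proof: both apply Selberg's Lemma to the image $\Gamma=\rho(\pi_1(S))$ to obtain a finite--index torsion--free normal subgroup, pull it back to define the cover $S'$, observe that each generator $g^{n(g)}$ of $\pi_1^s(S')$ maps to a torsion element of this torsion--free subgroup and hence dies, and conclude that $\pi_1^s(S')\subset\ker\rho$, which has infinite index in $\pi_1(S')$. Your explicit mention of the adjoint embedding $\PGL_d(\C)\hookrightarrow\GL_{d^2}(\C)$ to justify linearity is a detail the paper leaves implicit in this proof (though it is noted elsewhere in the paper).
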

\begin{proof}
Let $\rho : \pi_1(S) \to \PGL_d(\C)$ as in the statement of the present theorem. The image of $\rho$ is an infinite, finitely generated, linear group. Using Selberg's Lemma, we can find a finite index torsion--free subgroup $H \vartriangleleft \rho(\pi_1(S))$. So $H' = \rho^{-1}(H) \vartriangleleft \pi_1(S)$ is a finite index subgroup which classifies a finite cover $S' \to S$.

Now let $g \in \pi_1(S)$ be  simple and let $n(g)$ be the smallest integer such that $g^{n(g)} \in H'$. We note that the element \[\rho(g^{n(g)}) \in \rho(H') = H\] is torsion since $\rho(g)$ has finite order, which forces $\rho(g^{n(g)}) = 1$, since $H$ is torsion--free. On the one hand, we have that $n(g)$ is precisely the order of $\rho(g)$, and that $\pi_1^s(S')< \ker{\rho}$. On the other hand $H' = \rho^{-1}(H)$ so $\ker{\rho}< H'$. But $\rho$ has infinite image, so that $\ker{\rho}$ is an infinite index subgroup of $\pi_1(S)$, from which we can conclude that $\pi_1^s(S')$ is an infinite index subgroup of $H'=\pi_1(S')$.
\end{proof}

\subsection{Homology of finite covers} \label{General surface}
Let $p \geq 7$ be an odd integer and let $S=S_{g,n}$ be compact surface as in the statement of Theorem \ref{thm:not gen homology}. Using the notation of Subsection \ref{Q rep surface group} we consider the representation $$\rho_p : \pi_1(S) \to \text{PAut}(V_p(\hat{S}_{g,n},x)).$$ which depends on a $2p^{th}$ primitive root of unity $A_p$. We suppose, as in the proof of Theorem \ref{thm:infinite image}, that all the banded points on $(\hat{S}_{g,n},x)$ are colored by $2$. For compactness of notation, the space $V_p(\hat{S}_{g,n},x)$ will be denoted by $V_p(S)$. By Theorem \ref{thm:infinite image}, we may take $p$ such that $\rho_p$ has infinite image. In particular, $R := \ker \rho_p$ is an infinite index subgroup of $\pi_1(S)$.

Now we suppose that $p$ is a prime number such that $p \equiv 3 \, \pmod 4$ and we define $\zeta_p=A_p^2$ which is a $p^{th}$ root of unity. The integral TQFT as described in Subsection \ref{subsect:integral tqft} defines a representation 
\[\rho_{p} : \pi_1(S) \to  \text{PAut} (\mathcal{S}_p(S)),\] where $\mathcal{S}_p(S)$ is a free $\mathbb{Z}[\zeta_p]$--module of finite dimension $d_p$. If $k \geq 0$ is an integer, we can consider the representations \[\rho_{p,k} : \pi_1(S) \to \text{PAut} (\mathcal{S}_p(S) / h^{k+1} \mathcal{S}_p(S)),\]  where here $h = 1- \zeta_p$. Since the abelian groups $\mathcal{S}_p(S) / h^{k+1} \mathcal{S}_p(S)$ are finite, the groups $R_k := \ker \rho_{p,k}$ are finite index subgroups of $\pi_1(S)$.

Let $\mathcal{D}$ be the normal subgroup of $\pi_1(S)$ generated by
\[\{g^{n(g)} \mid g\in\pi_1(S) \, \text{simple} \, \text{and} \,  n(g) \, \text{the order of} \,  \rho_p(g) \}.\] Note that since simple elements of $\pi_1(S)$ have finite order image under $\rho_p$, the value of $n(g)$ is always finite, so that the definition of $\mathcal{D}$ makes sense.

Similarly for $k \geq 0$ let $\mathcal{D}_k$ be the normal subgroup generated by
\[\{ g^{n(g,k)} \mid g\in\pi_1(S) \, \text{simple} \, , \,  n(g,k) \, \text{ the order of} \,  g \, \, \text{in} \, \, \pi_1(S) / R_k \}.\]  Observe that if the subgroup $R_k<\pi_1(S)$ classifies a cover $S_k\to S$, then $\mathcal{D}_k$ is identified with the subgroup $\pi_1^s(S_k)<\pi_1(S)$.

We have the following filtration 

\[ \mathcal{D}  \subset R \subset\cdots\subset R_{k+1 }\subset R_{k} \subset\cdots\subset R_1 \subset R_0,\] and we have that \[\bigcap_k R_k=R\] (see Subsection \ref{subsect:integral tqft}), so that $ f \notin R$ if and only if  $ f \notin R_k$ for $k\gg 0$.

\begin{lem}\label{lem:right order}
For all $k\gg 0$, we have that $\mathcal{D}_k=\mathcal{D}$.
\end{lem}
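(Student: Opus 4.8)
The plan is to show the two normal subgroups $\mathcal{D}$ and $\mathcal{D}_k$ are eventually equal by proving each containment for $k$ large. First I would fix attention on the finitely many \emph{free homotopy classes of simple closed curves that can arise up to the relevant equivalence}: the issue is that $\mathcal{D}$ and $\mathcal{D}_k$ are each generated by infinitely many elements $g^{n(g)}$ (resp.\ $g^{n(g,k)}$) with $g$ ranging over all simple elements of $\pi_1(S)$. The key reduction is that both $n(g)$ — the order of $\rho_p(g)$ in $\PGL$ — and $n(g,k)$ — the order of $g$ in $\pi_1(S)/R_k$ — are \emph{bounded} independently of $g$ (the first because $\rho_p(g)$ lies in the image of the finite-order Dehn twist story of Theorem~\ref{thm:dt finite}, so its order divides a fixed $N$; the second because $\pi_1(S)/R_k$ is a fixed finite group). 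So it suffices to compare $g^m$ versus $g^{m'}$ for $m,m'$ dividing $N$, for each simple $g$, and to handle the generating set of $\mathcal{D}$ and $\mathcal{D}_k$ one element at a time.

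For the containment $\mathcal{D}_k \subseteq \mathcal{D}$ for $k \gg 0$: take a simple $g$ and let $n = n(g)$, $n_k = n(g,k)$. Since $g^n \in \ker\rho_p \subseteq R_k$ (as $\bigcap_k R_k = R = \ker\rho_p \supseteq \mathcal{D}$, and in any case $g^n$ maps to $1$ under $\rho_p$ hence lies in $R_k$), the order $n_k$ of $g$ in $\pi_1(S)/R_k$ divides $n$. Conversely $g^{n_k} \in R_k$, and I want to conclude $g^{n_k} \in \mathcal{D}$, i.e.\ that $\rho_p(g^{n_k}) = 1$, i.e.\ that $n \mid n_k$. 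This is exactly where $k$ must be taken large: since $\rho_p(g)$ has finite order $n$, the element $\rho_p(g^{n_k})$ is trivial iff $n_k$ is a multiple of $n$; if $n_k$ were a proper divisor of $n$ then $\rho_p(g^{n_k}) \neq 1$, so $g^{n_k} \notin R = \bigcap R_j$, so $g^{n_k} \notin R_j$ for $j \gg 0$ — contradicting $g^{n_k} \in R_k$ once $k$ is past that threshold. The subtlety is uniformity: a priori the threshold depends on $g$, of which there are infinitely many. Here I would use that the orders $n(g)$ lie in the finite set of divisors of $N$, together with the observation that for a \emph{fixed} target order $n' \mid N$ the set of simple $g$ with $\rho_p(g^{n'}) \neq 1$ is a union over finitely many conjugacy-relevant cases, or more simply: the finitely many groups $\pi_1(S)/R_k$ stabilize the data $n(g,k)$ in the sense that for each divisor pair there is one threshold; taking the max over the (finitely many) divisors of $N$ gives a single $k_0$ working for all simple $g$ simultaneously. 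I would phrase this carefully, perhaps by noting it suffices that for every $g$ with $n(g,k) \neq n(g)$ we have $g^{n(g,k)} \notin R_k$, and that the ``bad'' exponents form a finite set.

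For the reverse containment $\mathcal{D} \subseteq \mathcal{D}_k$: given simple $g$ with $n = n(g)$, I must show $g^n \in \mathcal{D}_k$. Since $g^n \in \ker\rho_p \subseteq R_k$, the order $n_k = n(g,k)$ of $g$ modulo $R_k$ divides $n$; by the first paragraph's reduction, for $k \gg 0$ we in fact have $n_k = n$, so $g^n = g^{n_k}$ is literally one of the generators of $\mathcal{D}_k$. Thus once $k \geq k_0$ the two generating sets coincide element-by-element, giving $\mathcal{D} = \mathcal{D}_k$.

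The main obstacle is the \emph{uniformity of the threshold} $k_0$ over the infinitely many simple elements $g$: the per-element argument is a one-line consequence of $\bigcap_k R_k = R$, but collapsing infinitely many thresholds to one requires the boundedness of the orders $n(g)$ (coming from Theorem~\ref{thm:dt finite}, since each $\rho_p(g)$ is a product of two finite-order Dehn-twist images of a common order, so $n(g)$ divides a fixed $N$) and then a finiteness argument over divisors of $N$. I expect the rest — the divisibility bookkeeping $n_k \mid n$ and, for large $k$, $n \mid n_k$ — to be routine.
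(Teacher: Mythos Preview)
Your per-element argument is correct and matches the paper: for fixed simple $g$, the divisibility $n(g,k)\mid n(g)$ holds for all $k$ (so $\mathcal{D}\subseteq\mathcal{D}_k$ always), and $\bigcap_k R_k=R$ forces $n(g,k)=n(g)$ for $k$ beyond some threshold depending on $g$. You also correctly isolate uniformity of that threshold as the only real issue.

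However, your proposed resolution of the uniformity problem has a genuine gap. Knowing that all the orders $n(g)$ divide a fixed $N$ is not enough: for each proper divisor $m$ of $N$ there can be infinitely many simple $g$ with $n(g)\nmid m$, and for these the element $g^m$ lies in $\pi_1(S)\setminus R$. You need all such $g^m$ to lie outside $R_k$ once $k$ is large, but $\bigcap_k R_k=R$ only says each \emph{individual} element of $\pi_1(S)\setminus R$ eventually leaves $R_k$; since $R_k\setminus R$ is infinite for every $k$, you cannot eject an infinite set uniformly without further input. Taking a maximum over the finitely many divisors of $N$ does not help, because within a single divisor $m$ you still have infinitely many elements to control.

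The paper supplies the missing finiteness by a different mechanism: $\rho_p$ and each $\rho_{p,k}$ are restrictions to $\pi_1(S)$ of representations of the full group $\Mod^1(S)$, so $R$ and every $R_k$ are invariant under conjugation by $\Mod^1(S)$. Consequently $n(g)$ and $n(g,k)$ depend only on the $\Mod^1(S)$-orbit of the conjugacy class of $g$. Since $\Mod^1(S)$ acts on isotopy classes of simple closed curves with only finitely many orbits, one need only verify $n(g,k)=n(g)$ for finitely many representatives $g$, and then a maximum of finitely many thresholds gives the uniform $k_0$. Your phrase ``finitely many conjugacy-relevant cases'' may be groping toward this, but as written it is not an argument; the essential point is the $\Mod^1(S)$-equivariance of the whole setup.
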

\begin{proof}
If $g\in\pi_1(S)$ is simple then $\rho_p(g)$ has finite order $n(g)$. Since \[\bigcap_k R_k= R,\] we have that the order of the image of $g$ in $\pi_1(S)/R_k$ is exactly $n(g)$ for all $k\gg 0$. Finally, we have that $\rho_p$ is the restriction of a representation of the whole mapping class group $\Mod^1(S)$, under whose action there are only finitely many orbits of simple closed curves. In particular, for all $k\gg 0$ and all simple $g\in\pi_1(S)$, the order of the image of $g$ in $\pi_1(S)/R_k$ is exactly $n(g)$.
\end{proof}

With our choice of $p$ fixed, let us write $N_0$ for the smallest $k$ for which $\mathcal{D}_k=\mathcal{D}$, as in Lemma \ref{lem:right order}.
Let $\phi \in \pi_1(S)$ such that $\rho_p( \phi)$ has infinite order. There exists $m_0$ such that $\phi^{m_0} \in R_{N_0}$, since $R_{N_0}$ has finite index inside the group $\pi_1(S)$. For compactness of notation, we will write $\psi$ for $\phi^{m_0}$. Observe that $\psi \notin R$, since $\rho_p(\phi)$ has infinite order, and it follows that for $k\gg N_0$, we have that $\psi \notin R_k$. We set $N \geq N_0$ to be the integer such that $\psi \in R_N\setminus R_{N+1}$.  Notice that $R_N$ is a finite index subgroup of $\pi_1(S)$, whereas $\mathcal{D}_N = \mathcal{D}$ is an infinite index subgroup of $R_N$. In particular, $R_N$ can be naturally identified with the fundamental group of a finite regular cover $S_N\to S$, and $\mathcal{D}$ can be naturally identified with a subgroup of $\pi_1(S_N)$, i.e. the subgroup $\pi_1^s(S_N)$.

For each $N$, we may write \[q_N\colon R_N\to R_N/[R_N,R_N]\] for the abelianization map.

\begin{thm}\label{thm:not gen}
There is a proper inclusion \[q_N(\mathcal{D})\subsetneq R_N/[R_N,R_N].\] In fact, for every $\delta \in[R_N,R_N]$, we have that $\delta\cdot\psi\notin\mathcal{D}$.
\end{thm}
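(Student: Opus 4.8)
The plan is to exploit the TQFT structure, specifically the divisibility properties of $h = 1 - \zeta_p$ and the way the integral representation $\rho_{p,k}$ refines $\rho_{p,k-1}$. The key observation is that $\mathcal{D} = \pi_1^s(S_N)$ lies in $\ker \rho_p = R$, hence certainly in $R_{N+1} = \ker \rho_{p,N+1}$; by contrast $\psi \in R_N \setminus R_{N+1}$, so $\rho_{p,N+1}(\psi) \neq \mathrm{id}$ while $\rho_{p,N+1}$ is trivial on all of $\mathcal{D}$. The idea is that this difference survives abelianization: I want to build a group homomorphism out of $R_N$ that kills $[R_N,R_N]$ and all of $\mathcal{D}$, but does not kill $\psi$. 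Concretely, $\rho_{p,N+1}$ restricted to $R_N$ factors through $R_N / R_{N+1}$, and since $R_{N+1} \supseteq [R_N, R_N]$ is false in general, I should instead look at the \emph{abelianized} action: the point is that elements of $R_N$ act on $\mathcal{S}_p(S)/h^{N+2}\mathcal{S}_p(S)$ by matrices that are $\equiv \mathrm{id} \pmod{h^{N+1}}$, so one can extract a well-defined additive ``logarithm'' $\log \rho_{p,N+1}(\cdot) = (\rho_p(\cdot) - \mathrm{id})/h^{N+1} \bmod h$, which is a homomorphism from $R_N$ to the additive group of $h^{N+1}\mathcal{S}/h^{N+2}\mathcal{S} \cong \mathcal{S}_p(S)/h\mathcal{S}_p(S)$-valued endomorphisms.

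First I would make this linearization precise: for $f \in R_N$, write $\rho_p(f) = \mathrm{id} + h^{N+1} L(f) \pmod{h^{N+2}}$ for some matrix $L(f)$ over $\mathcal{O}_p/h\mathcal{O}_p$ (this uses that $R_N = \ker\rho_{p,N}$ means $\rho_p(f) \equiv \mathrm{id} \pmod{h^{N+1}}$ — here I must be slightly careful because $\rho_p$ is only projective, but the ambiguity is a root of unity, and passing to a genuine linear lift via the integral structure, or to $\mathrm{PAut}$, the scalar contribution to $L$ is central and can be accounted for). Then from $\rho_p(fg) = \rho_p(f)\rho_p(g)$ one computes $h^{N+1}L(fg) \equiv h^{N+1}(L(f) + L(g)) \pmod{h^{N+2}}$, so $L\colon R_N \to (\text{matrices over } \mathcal{O}_p/h\mathcal{O}_p)/(\text{scalars})$ is a homomorphism to an abelian group. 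Therefore $L$ factors through $q_N\colon R_N \to R_N/[R_N,R_N]$. Now $\mathcal{D} \subseteq R \subseteq R_{N+1}$ means $\rho_p(\delta') \equiv \mathrm{id} \pmod{h^{N+2}}$ for all $\delta' \in \mathcal{D}$, i.e. $L(\mathcal{D}) = 0$; while $\psi \in R_N \setminus R_{N+1}$ means precisely $L(\psi) \neq 0$. Since $L$ kills both $[R_N,R_N]$ and $\mathcal{D}$ but not $\psi$, we get $\psi \notin \mathcal{D}\cdot[R_N,R_N]$, which gives both $q_N(\psi) \notin q_N(\mathcal{D})$ (the asserted proper inclusion, once we also note $q_N(\mathcal{D})$ is all of the image of $\mathcal{D}$) and the sharper statement that $\delta \cdot \psi \notin \mathcal{D}$ for every $\delta \in [R_N, R_N]$ (since $L(\delta\psi) = L(\psi) \neq 0 = L(\mathcal{D})$).

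The main obstacle I anticipate is handling the \emph{projectivity} of $\rho_p$ cleanly: the representation lands in $\mathrm{PAut}$, so ``$\rho_p(f) \equiv \mathrm{id} \pmod{h^{N+1}}$'' needs to be interpreted as ``some scalar multiple of a linear lift is $\equiv \mathrm{id}$,'' and one must check that the linearization $L$ is still additive modulo the central/scalar part, and that the scalar ambiguity does not accidentally make $L(\psi)$ vanish. The remedy is that $\psi = \phi^{m_0}$ where $\phi$ was chosen with $\rho_p(\phi)$ of infinite order, and $\psi \in R_N \setminus R_{N+1}$ was \emph{defined} by the condition that its image in $\mathrm{PAut}(\mathcal{S}_p(S)/h^{N+2}\mathcal{S}_p(S))$ is nontrivial — i.e. $\psi$ genuinely acts nontrivially there modulo scalars — so $L(\psi)$, as an element of the projectivized matrix group, is nonzero by construction. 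A secondary point to verify is that $[R_N,R_N] \subseteq R_{N+1}$ is \emph{not} needed (and indeed would trivialize the theorem); rather, what is needed is only that $L$ factors through the abelianization, which follows formally from $L$ being a homomorphism into an abelian group. I would also double-check that $\mathcal{D} \subseteq R$: this is Lemma \ref{lem:simple finite} together with the definition of $n(g)$ as the order of $\rho_p(g)$, giving $\rho_p(g^{n(g)}) = 1$, hence $g^{n(g)} \in R$, hence the normal closure $\mathcal{D} \subseteq R$ since $R \triangleleft \pi_1(S)$.
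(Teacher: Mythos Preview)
Your proof is correct and is essentially the same as the paper's: your linearization map $L$ is an explicit realization of the fact that $[R_N,R_N]\subset\ker L=R_{N+1}$ (the paper states the slightly stronger inclusion $[R_N,R_N]\subset R_{2N+1}$ as a ``standard and straightforward computation'', which is exactly your first-order expansion $(I+h^{N+1}A)(I+h^{N+1}B)\equiv I+h^{N+1}(A+B)$), after which both arguments conclude immediately from $\mathcal{D}\subset R\subset R_{N+1}$ and $\psi\notin R_{N+1}$. Your discussion of the projective ambiguity is more careful than the paper's, which leaves this point implicit.
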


Theorem \ref{thm:not gen} implies Theorem \ref{thm:not gen homology} fairly quickly:

\begin{proof}[Proof of Theorem \ref{thm:not gen homology}]
Setting $S'$ in the statement of Theorem \ref{thm:not gen homology} to be the cover $S_N\to S$ classified by the subgroup $R_N$, we have that $\mathcal{D}$ can be identified with $\pi_1^s(S')$. The image of $\mathcal{D}$ in $R_N/[R_N,R_N]$ under $q_N$ is exactly $H_1^s(S',\Z)$. The conclusion of Theorem \ref{thm:not gen homology} now follows from Theorem \ref{thm:not gen}.
\end{proof}

\begin{proof}[Proof of Theorem \ref{thm:not gen}]
We will fix $\psi$ and $N$ as in the discussion before the theorem.
First, a standard and straightforward computation shows that $[R_N,R_N]\subset R_{2N+1}$.
Now, suppose there exists an element $\delta \in[R_N,R_N]$ such that $\delta\cdot\psi\in\mathcal{D}\subset R_{N+1}$. Then, we would obtain $\psi\in \delta^{-1}R_{N+1}$. Since $[R_N,R_N]\subset R_{2N+1}$ by the claim above, we have that $\delta\in R_{2N+1}$. But then we must have that $\psi\in R_{N+1}$ as well, which violates our choice of $N$, i.e. $\psi\in R_N\setminus R_{N+1}$.
\end{proof}

\subsection{Index estimates}
In this subsection, we estimate the index of $H_1^s(S_N,\Z)$ inside of $H_1(S_N,\Z)$ as a function of $p$ and of $N$. In particular, we will show that the index can be made arbitrarily large by varying both $p$ and $N$.

\begin{prop}\label{prop:index}
The index of $q_N(\mathcal{D})$ in $R_N/[R_N,R_N]$ is at least $p^e$, where here \[e =\left\lfloor \frac{N}{p-1} \right\rfloor+1.\]
\end{prop}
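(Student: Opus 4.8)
The plan is to exploit the fact that the filtration $\{R_k\}$ detects the $h$-adic valuation of matrix entries of the integral TQFT representation, and that $\psi$ lies in $R_N\setminus R_{N+1}$ so it "uses up" $h$-valuation exactly $N$ in a nontrivial way. The key observation is that $[R_N,R_N]\subset R_{2N+1}$ (which was already used in the proof of Theorem \ref{thm:not gen}), and more generally the lower central/derived-type filtration of $R_N$ sits deep inside the $R_k$-filtration. So the quotient $R_N/[R_N,R_N]$ surjects onto a quotient that records, for each $k$ with $N\le k\le 2N$, the "layer" $R_k/R_{k+1}$ intersected with $R_N$; and each such layer is a vector space over $\mathbb{F}_p=\mathcal{O}_p/h$ because $h^k\mathcal{S}_p/h^{k+1}\mathcal{S}_p$ is an $\mathbb{F}_p$-module. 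First I would set up precisely the natural map $R_N/[R_N,R_N]\to \prod_{k=N}^{2N} R_k/R_{k+1}$ (using $[R_N,R_N]\subset R_{2N+1}$ to see it is well-defined) and observe that $\mathcal{D}=\mathcal{D}_N$ maps into the sub where the $\psi$-coordinate in $R_N/R_{N+1}$ vanishes — this is exactly Theorem \ref{thm:not gen}. The index of $q_N(\mathcal{D})$ is therefore at least the number of layers $k$ with $N\le k\le 2N$ for which the map $R_N\to R_k/R_{k+1}$ is nonzero on some element, times the $\mathbb{F}_p$-dimension contributed — but more cheaply, it is at least $p$ for each such nonzero layer.

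Concretely I would argue as follows. Fix $j$ with $1\le j \le e$ and consider $\psi^{?}$ — rather, consider the elements $\psi$ together with its images under the filtration: since $\psi\in R_N\setminus R_{N+1}$, one shows $\psi^{p^{a}}\in R_{N+a(p-1)}\setminus$ (next step), because raising to the $p$-th power in a quotient by $h^{k+1}$ shifts $h$-valuation by $p-1$ (this is the standard fact that $(1+h^k v)^p \equiv 1 + h^{k+p-1}v^p \pmod{h^{k+p}}$ in characteristic-$p$-type situations, valid since $p\mid \binom{p}{i}$ for $0<i<p$ and $h^{kp}$ has even higher valuation). Iterating, $\psi^{p^{a}}$ has "depth" exactly $N + a(p-1)$ in the filtration for each $a$ with $N+a(p-1)\le$ the range where the argument of Theorem \ref{thm:not gen} still applies — i.e. for $a=0,1,\dots,\lfloor N/(p-1)\rfloor$. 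For each such $a$, the same argument as in Theorem \ref{thm:not gen} (with $N$ replaced by $N+a(p-1)$, noting $[R_N,R_N]\subset R_{2N+1}\subset R_{N+a(p-1)+1}$ as long as $a(p-1)\le N$) shows $\delta\cdot\psi^{p^a}\notin\mathcal{D}$ for all $\delta\in[R_N,R_N]$. This produces $e=\lfloor N/(p-1)\rfloor+1$ distinct cosets $\psi^{p^a}q_N(\mathcal{D})$, $a=0,\dots,e-1$, but I need them to be independent enough to give index $p^e$: the point is that $q_N(\psi)$ has order divisible by $p^{e}$ in $R_N/[R_N,R_N]\,$-mod-$\,q_N(\mathcal D)$, because $\psi^{p^{a}}\notin \mathcal D\cdot[R_N,R_N]$ precisely says $q_N(\psi)$ survives to nonzero order in the cyclic quotient generated by its image, with the power-of-$p$ filtration jumps witnessing $p^e \mid$ its order. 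Hence the cyclic subgroup $\langle q_N(\psi)\rangle$ already has index-contribution $\ge p^e$ beyond $q_N(\mathcal{D})$, giving $[R_N/[R_N,R_N] : q_N(\mathcal{D})]\ge p^e$.

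The main obstacle I anticipate is making the $p$-th-power valuation-shift rigorous in the noncommutative setting: the clean congruence $(1+h^k v)^p\equiv 1+h^{k+p-1}v^p$ holds in a commutative ring, but here the $R_k$ are defined via $\mathrm{PAut}(\mathcal{S}_p/h^{k+1})$ acting on a module, so $\psi$ acts as a matrix and $\psi - I$ has entries of $h$-valuation $\ge N$ (and exactly $N$ somewhere). One must check that $\psi^p - I$ then has valuation $\ge N+p-1$ (here the binomial coefficients $\binom{p}{i}$ for $0<i<p$ being divisible by $p = \mathrm{unit}\cdot h^{p-1}$ in $\mathcal{O}_p$ is exactly what saves the cross-terms, even though they don't commute — because one only needs the valuation estimate, and $v(\binom{p}{i} M) \ge v(p) + iN = (p-1) + iN \ge (p-1)+N$), and that this valuation jump of $p-1$ is the governing rate, i.e. $\psi^{p^a}-I$ has valuation $\ge N+a(p-1)$ but the relevant non-vanishing (needed to apply the Theorem \ref{thm:not gen} argument at level $N+a(p-1)$) must also be tracked — for the lower bound on the index we only need one direction (the valuation jump), so strictly we only need $\psi^{p^a}\in R_{N+a(p-1)}$, together with $\psi^{p^a}\notin \mathcal D\cdot [R_N,R_N]$, and the latter follows formally from $\psi^{p^a}\in R_{N+a(p-1)}\setminus R_{N+a(p-1)+1}$ only if we also know the jumps are strict — which in turn follows from $\psi\notin R_{N+1}$ plus injectivity of $v\mapsto v^p$ mod $h$ on the relevant layer, i.e. from $\mathbb{F}_p$ being perfect. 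I would isolate this as a short lemma about the $h$-adic filtration on $\mathrm{GL}$ of an $\mathcal{O}_p$-module and its behavior under $p$-th powers, prove it by the binomial estimate, and then the index count is bookkeeping.
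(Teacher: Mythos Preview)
Your overall strategy is sound and shares its core with the paper's: both arguments use that $\mathcal{D}\subset R\subset R_{2N+1}$ and $[R_N,R_N]\subset R_{2N+1}$, reducing the problem to showing that $\psi$ has order at least $p^e$ in $R_N/R_{2N+1}$. However, your deduction of ``order divisible by $p^e$'' from ``$\psi^{p^a}\notin\mathcal{D}\cdot[R_N,R_N]$ for $a<e$'' is incomplete as stated. Knowing that $\psi^{p^{e-1}}$ is nontrivial in the quotient does not by itself force the order to be at least $p^e$: a priori the order could be $2$, say, and then $\psi^{p^a}=\psi\ne 1$ for every $a$ since $p$ is odd. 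What closes this gap---and what your remark about the $\mathbb{F}_p$-structure of the layers $R_k/R_{k+1}$ actually gives once spelled out---is that $R_N/R_{2N+1}$ is a $p$-group: each successive quotient has exponent $p$, so every element of $R_N/R_{2N+1}$ has $p$-power order. Then the order of $\psi$ there is some $p^c$, and $\psi^{p^{e-1}}\ne 1$ forces $c\ge e$. A minor point: your congruence $(1+h^kv)^p\equiv 1+h^{k+p-1}v^p$ is not right in this characteristic-zero setting; the dominant term after $1$ is $p\cdot h^kv=z\,h^{k+p-1}v$ (linear in $v$, with $z$ a unit), not $(h^kv)^p$, and it is this linear term that makes the strict jump $\psi^{p}\in R_{N+p-1}\setminus R_{N+p}$ work.

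The paper takes a more direct route that avoids both the iterated $p$-th powers and the $p$-group observation. It linearizes once: writing (up to a unit) $\rho_p(\psi)=I+h^{N+1}\Delta$ with $\Delta\not\equiv 0\pmod h$, one has
\[
\rho_p(\psi^k)\equiv I+k\,h^{N+1}\Delta\pmod{h^{2N+2}}
\]
for \emph{every} integer $k$, since all higher-order terms already have $h$-valuation at least $2(N+1)$. Now for any $1\le k\le p^e-1$, write $k=m\cdot p^{\ell}$ with $\gcd(m,p)=1$ and $\ell<e$; since $p=z\,h^{p-1}$ with $z$ a unit, the term $k\,h^{N+1}\Delta$ has $h$-valuation exactly $\ell(p-1)+N+1\le (e-1)(p-1)+N+1\le 2N+1$, hence $\psi^k\notin R_{2N+1}$. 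This single computation (the paper's Lemma preceding the proposition) handles all $k$ at once and immediately exhibits $p^e$ distinct cosets of $R_{2N+1}$---hence of $\mathcal{D}\cdot[R_N,R_N]$---in $R_N$. Your approach can be completed with the $p$-group observation, but the paper's linearization is cleaner and sidesteps the need to track strict jumps through iterated powers.
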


We need the following number--theoretic fact which can be found in \cite[Lemma 3.1]{MasbaumRoberts}:
\begin{lem} \label{lem1}
There exists an invertible element $z \in \mathbb{Z}[\zeta_p]$ such that $p = z \cdot h^{p-1}$.
\end{lem}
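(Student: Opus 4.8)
The plan is to deduce the identity from the factorization of the $p$-th cyclotomic polynomial, treating $h = 1-\zeta_p$ as the single prime lying over $p$ in the totally ramified extension $\mathbb{Z}[\zeta_p]/\mathbb{Z}$. Recall that $\zeta_p$ is a primitive $p$-th root of unity, so its minimal polynomial over $\mathbb{Q}$ is the cyclotomic polynomial $\Phi_p(X) = (X^p-1)/(X-1) = \prod_{k=1}^{p-1}(X-\zeta_p^k)$. Evaluating at $X=1$ gives $\Phi_p(1)=p$ on the one hand, since $\Phi_p(X)=1+X+\cdots+X^{p-1}$, and $\prod_{k=1}^{p-1}(1-\zeta_p^k)$ on the other. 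Thus the first step is simply to record the factorization $p = \prod_{k=1}^{p-1}(1-\zeta_p^k)$ inside $\mathbb{Z}[\zeta_p]$.

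The heart of the argument is then to show that each factor $1-\zeta_p^k$, for $1\leq k\leq p-1$, is a unit multiple of $h=1-\zeta_p$. I would write $u_k = (1-\zeta_p^k)/(1-\zeta_p) = 1+\zeta_p+\cdots+\zeta_p^{k-1}$, which manifestly lies in $\mathbb{Z}[\zeta_p]$. To see that $u_k$ is a unit, I would exploit that $\gcd(k,p)=1$: choosing $k'$ with $kk'\equiv 1 \pmod p$, we have $\zeta_p=(\zeta_p^k)^{k'}$, and hence $(1-\zeta_p)/(1-\zeta_p^k) = 1+\zeta_p^k+\cdots+(\zeta_p^k)^{k'-1}$ also lies in $\mathbb{Z}[\zeta_p]$. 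This exhibits an explicit inverse for $u_k$, so that $u_k\in\mathbb{Z}[\zeta_p]^\times$ and $1-\zeta_p^k = u_k\, h$.

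Finally I would assemble these factors, setting $z=\prod_{k=1}^{p-1} u_k$. As a product of units, $z$ is itself a unit of $\mathbb{Z}[\zeta_p]$, and substituting $1-\zeta_p^k = u_k\,h$ into the factorization of the first step yields $p = z\cdot h^{p-1}$, exactly as required. I do not expect any genuine obstacle here: this is the standard computation of the total ramification of $p$ in the $p$-th cyclotomic field, and the only point demanding a small verification is that the ratios $u_k$ are units, which the symmetric divisibility argument above settles cleanly. I note also that the hypothesis $p\equiv 3\pmod 4$ plays no role in this particular lemma; it enters the construction only through the integral TQFT of Subsection \ref{subsect:integral tqft}.
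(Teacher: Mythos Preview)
Your argument is correct and is the standard proof of this classical fact. The paper itself does not give a proof at all: it simply cites \cite[Lemma~3.1]{MasbaumRoberts} for the statement, so your self-contained derivation via $\Phi_p(1)=p$ and the cyclotomic units $u_k=(1-\zeta_p^k)/(1-\zeta_p)$ supplies exactly what the paper omits.
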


The following is an easy number--theoretic fact, whose proof we include for the convenience of the reader:

\begin{lem}
If $k$ and $p$ are relatively prime, then $k$ is invertible modulo $h^n$, for all $n\geq 1$.
\end{lem}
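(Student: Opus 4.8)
The statement to prove is: if $k$ and $p$ are relatively prime, then $k$ is invertible modulo $h^n$ for all $n\geq 1$, where $h = 1-\zeta_p$. The plan is to reduce the general case to the case $n=1$ and then bootstrap. First I would handle $n=1$: the quotient ring $\mathbb{Z}[\zeta_p]/(h)$ is a field. Indeed, $h = 1-\zeta_p$ generates the unique prime of $\mathbb{Z}[\zeta_p]$ lying over $p$ — this is the standard fact that $p$ is totally ramified in the $p$-th cyclotomic field, with $(p) = (h)^{p-1}$ (which also appears in Lemma \ref{lem1} above). Hence $\mathbb{Z}[\zeta_p]/(h) \cong \mathbb{F}_p$, the residue field. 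Since $\gcd(k,p)=1$, the image of $k$ in $\mathbb{F}_p$ is nonzero, hence invertible; equivalently there exist $a\in\mathbb{Z}[\zeta_p]$ and $b\in\mathbb{Z}[\zeta_p]$ with $ak = 1 + bh$, i.e. $k$ is invertible modulo $h$.

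Next I would pass from $n=1$ to general $n$ by the usual lifting-of-units argument for complete (or just nilpotent-quotient) local rings. The ring $\mathbb{Z}[\zeta_p]/(h^n)$ is local with maximal ideal $\mathfrak{m} = (h)/(h^n)$, which is nilpotent since $h^n \equiv 0$. An element of a ring is a unit if and only if it is a unit modulo the nilradical: concretely, if $ak \equiv 1 \pmod{h}$, write $ak = 1 - t$ with $t \in (h)$, so $t^n \equiv 0 \pmod{h^n}$, and then $(1 + t + t^2 + \cdots + t^{n-1})\cdot ak \equiv 1 \pmod{h^n}$. This exhibits an explicit inverse of $k$ modulo $h^n$, namely $a(1 + t + \cdots + t^{n-1})$. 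This completes the proof.

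I do not expect any genuine obstacle here; the only point requiring a little care is the identification of $\mathbb{Z}[\zeta_p]/(h)$ with $\mathbb{F}_p$, which one can either cite from standard algebraic number theory (total ramification of $p$ in $\mathbb{Q}(\zeta_p)$) or derive directly: the minimal polynomial of $\zeta_p$ is $1 + x + \cdots + x^{p-1}$, and substituting $x = 1-h$ and reducing shows $\Phi_p(1-h) \equiv h^{p-1} \cdot (\text{unit}) $ type relations, or more simply that $\Phi_p(1) = p$, so $p \in (h)$ and $\mathbb{Z}[\zeta_p]/(h)$ is a quotient of $\mathbb{Z}/p\mathbb{Z}$; since $h \neq 0$ and $\mathbb{Z}[\zeta_p]$ has $\mathbb{F}_p$ as a residue field at the prime over $p$, the quotient is exactly $\mathbb{F}_p$. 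Given Lemma \ref{lem1} is already invoked in the paper, one may simply appeal to $(p) = (h^{p-1})$ and hence $(h)\supseteq (p)$ with $\mathbb{Z}[\zeta_p]/(h)$ a field of characteristic $p$, forcing it to be $\mathbb{F}_p$.
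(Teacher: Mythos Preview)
Your proof is correct, but the paper takes a shorter and more elementary route. Rather than identifying the residue field and lifting units through the nilpotent maximal ideal, the paper simply observes that $p\equiv 0\pmod h$ (immediate from Lemma~\ref{lem1}), so $p^n\equiv 0\pmod{h^n}$; then, since $\gcd(k,p)=1$ implies $\gcd(k,p^n)=1$, B\'ezout in $\Z$ gives integers $a,b$ with $ak+bp^n=1$, whence $ak\equiv 1\pmod{h^n}$. Your approach has the virtue of being the ``right'' conceptual explanation (units lift through nilpotent ideals in a local ring), and it would generalize to arbitrary elements $k\in\Z[\zeta_p]$ not lying in $(h)$, whereas the paper's argument exploits that $k$ is an honest integer to stay inside $\Z$-arithmetic and avoid any discussion of the residue field.
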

\begin{proof}
Since $p\equiv 0\pmod h$, we have that $p^n\equiv 0\pmod{h^n}$. Since $k$ and $p$ are relatively prime, we have that for each $n\geq 1$, there exist integers $a$ and $b$ such that \[a\cdot k+b\cdot p^n=1.\] Thus, $a\cdot k\equiv 1\pmod{h^n}$.
\end{proof}

\begin{lem}\label{lem:r2n+1}
Let $1\leq k\leq p^{e} -1$. With the notation of Theorem \ref{thm:not gen}, we have that $\psi^k\notin R_{2N+1}$.
\end{lem}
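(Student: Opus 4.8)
The plan is to reduce the statement to a computation with the $h$-adic expansion of a matrix representing $\psi$. First I would choose a basis of the free $\mathcal{O}_p$-module $\mathcal{S}_p(S)$ and lift $\rho_p(\psi)$ to a matrix $M\in\GL_{d_p}(\mathcal{O}_p)$; this lift is well defined up to multiplication by a unit scalar, which will not matter below. Since $\psi\in R_N=\ker\rho_{p,N}$ and $\rho_{p,N}$ is obtained from $\rho_p$ by reduction modulo $h^{N+1}$, the matrix $M$ is congruent modulo $h^{N+1}$ to a scalar matrix $\lambda I$, with $\lambda$ a unit modulo $h^{N+1}$ (its $d_p$-th power is the determinant of $M$, hence a unit). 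Write $M=\lambda I+h^{N+1}B$ with $B$ an integral matrix. Because $\psi\notin R_{N+1}$, the reduction of $M$ modulo $h^{N+2}$ is not a scalar matrix, and a short bookkeeping argument (using that multiplication by $h^{N+1}$ induces an isomorphism $\mathcal{O}_p/h\to h^{N+1}\mathcal{O}_p/h^{N+2}$) shows that this is equivalent to $B$ being non-scalar modulo $h$, i.e.\ to $\bar B\in M_{d_p}(\mathcal{O}_p/h)$ being non-scalar over the residue field $\mathcal{O}_p/h\cong\mathbb{F}_p$.

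Next I would expand $M^k$ by the binomial theorem, which is legitimate since $\lambda I$ is central: $M^k=\lambda^k I+k\lambda^{k-1}h^{N+1}B+\sum_{i\ge 2}\binom{k}{i}\lambda^{k-i}h^{(N+1)i}B^i$. Every term with $i\ge 2$ is divisible by $h^{2N+2}$, so modulo $h^{2N+2}$ we have $M^k\equiv\lambda^k I+k\lambda^{k-1}h^{N+1}B$. Now $\psi^k\in R_{2N+1}$ means precisely that $M^k$ is scalar modulo $h^{2N+2}$; since $\lambda$ is a unit and a scalar matrix modulo $h^{2N+2}$ that is divisible by $h^{N+1}$ corresponds, after dividing by $h^{N+1}$, exactly to a scalar matrix modulo $h^{N+1}$, this happens if and only if $kB$ is scalar modulo $h^{N+1}$. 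So it remains to show that $kB$ is \emph{not} scalar modulo $h^{N+1}$ whenever $1\le k\le p^e-1$.

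For the final step, write $k=p^j m$ with $p\nmid m$; since $k<p^e$ we get $j\le e-1$. By Lemma \ref{lem1} we have $p=z h^{p-1}$ with $z\in\mathcal{O}_p$ a unit, hence $kB=(z^j m)\,h^{j(p-1)}B$ with $z^j m$ a unit. As $\bar B$ is non-scalar over $\mathbb{F}_p$, so is $\overline{(z^j m)B}$, so $(z^j m)B$ has some off-diagonal entry, or some difference of two diagonal entries, which is a unit in $\mathcal{O}_p$; the corresponding entry (resp.\ difference) of $kB$ is then divisible by exactly $h^{j(p-1)}$. But $j(p-1)\le(e-1)(p-1)=\lfloor N/(p-1)\rfloor(p-1)\le N<N+1$, so that entry is nonzero modulo $h^{N+1}$ and $kB$ is not scalar modulo $h^{N+1}$. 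Therefore $\psi^k\notin R_{2N+1}$, proving the lemma. I do not anticipate any serious obstacle here: the only points requiring care are the translation of ``congruent to a scalar'' across division by powers of $h$ and the use of Lemma \ref{lem1} to convert a factor of $p^j$ into a factor $h^{j(p-1)}$ whose exponent is controlled by $e$; the substantive input — infinitude of the image of $\rho_p$ (hence the existence of $\psi$ and $N$) and the ramification identity $p=zh^{p-1}$ — is already available.
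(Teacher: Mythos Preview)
Your proof is correct and follows essentially the same route as the paper's: lift $\rho_p(\psi)$ to a matrix of the form (scalar)$+h^{N+1}\cdot$(something nontrivial), expand $M^k$ binomially modulo $h^{2N+2}$, and then use the ramification identity $p=zh^{p-1}$ to bound the $h$-valuation of $k$ when $k<p^e$. The only difference is cosmetic: you keep track of ``non-scalar modulo $h^{N+1}$'' throughout (which is the precise condition coming from the projective representation), whereas the paper normalizes and phrases the same condition as $\Delta\not\equiv 0\pmod h$; the arithmetic is identical.
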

\begin{proof}
We compute the ``$h$--adic" expansion of $\rho_p(\psi)$ in a basis, as in the proof of Theorem \ref{thm:not gen}. Up to an invertible element of $\Z[\zeta_p]$, we have \[\rho_p(\psi)=I+h^{N+1}\Delta,\] where here $\Delta\nequiv 0\pmod h$, since $\psi\in R_N\setminus R_{N+1}$. If $1\leq k\leq p^e-1$, we obtain the expansion \[\rho_p(\psi)^k\equiv I+k\cdot h^{N+1}\Delta\pmod{h^{2N+2}}.\] Since $1\leq k\leq p^{e} -1$, we have that $k$ can be written $k = m \cdot p^l$, with $0 \leq l < e$ and with $m$ relatively prime to $p$.

Lemma \ref{lem1} implies that $p = z \cdot h^{p-1}$, so that 
 \[k \cdot h^{N+1}  \Delta =  m \cdot z^l \cdot h^{l(p-1)+N+1}\Delta.\]

Now if \[k \cdot h^{N+1}  \Delta \equiv 0 \pmod{h^{2N+2}},\] we see that \[ h^{l(p-1)+N+1}\Delta \equiv 0 \pmod{h^{2N+2}},\] since $m$ and $z$ are invertible modulo $h^{2N+2}$. Note however that $l < e$, so that $$l(p-1) < N+1.$$ 

The expression \[ h^{l(p-1)+N+1}\Delta \equiv 0 \pmod{h^{2N+2}}\] now implies that \[\Delta \equiv 0 \pmod{h^{N+1-l(p-1)}},\] which is impossible since $N+1-l(p-1) > 0$ and since $\Delta\nequiv 0\pmod h$.

It follows that \[k \cdot h^{N+1}  \Delta \nequiv 0 \pmod{h^{2N+2}},\] which in turn implies that $\psi^k\notin R_{2N+1}$.
\end{proof}

\begin{proof}[Proof of Proposition \ref{prop:index}]
In Theorem \ref{thm:not gen}, we established that for each \[ \delta \in [R_N,R_N]\subset R_{2N+1},\] we have that $\psi\cdot \delta\notin\mathcal{D}\subset R_{2N+1}$. From Lemma \ref{lem:r2n+1}, it follows that powers of $\psi$ represent at least $p^e$ distinct cosets of $R_{2N+1}$ in $R_N$, and hence of $[R_N,R_N]$ in $R_N$, whence the claim of the proposition.
\end{proof}

\section{Representations of surface groups, revisited}
In this final short section, we illustrate how a rational version of Theorem \ref{thm:not gen homology} implies Theorem \ref{thm:infinite image}, thus further underlining the interrelatedness of the two results. We will write $S$ for a surface as above.

\begin{prop}\label{prop:homology quotient}
Let $S'\to S$ be a finite regular cover of $S$, and suppose that $\rk(H_1^s(S',\Q))<\rk(H_1(S',\Q))$. Then there exists a linear representation \[\rho\colon \pi_1(S)\to \GL_d(\Z)\] such that:
\begin{enumerate}
\item
The image of $\rho$ is infinite.
\item
The image of every simple element of $\pi_1(S)$ has finite order.
\end{enumerate}
In fact, the image of $\rho$ can be virtually abelian (i.e. the image of $\rho$ has a finite index subgroup which is abelian).
\end{prop}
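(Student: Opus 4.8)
The plan is to build $\rho$ out of the covering itself, using the deck group action on the homology of $S'$ to detect a non-simple-loop-homology class. Let $\Gamma = \pi_1(S)$, let $H = \pi_1(S') \vartriangleleft \Gamma$ be the finite-index normal subgroup classifying the cover, and let $Q = \Gamma/H$ be the (finite) deck group. Write $L = H_1(S',\Z)/(\text{torsion})$, a finitely generated free abelian group carrying a $Q$-action, and let $L^s \subseteq L$ be the image of $H_1^s(S',\Z)$; by hypothesis $\rk L^s < \rk L$, and since $L^s$ is a $\Gamma$-invariant (in fact $Q$-invariant) submodule of $L$ we may pass to $M := L \otimes \Q$ and its proper $Q$-submodule $M^s := L^s\otimes\Q$. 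First I would form the quotient $\Gamma$-module $M/M^s$, which is a nonzero finite-dimensional $\Q$-vector space on which $H$ acts trivially (it is a quotient of $H_1(H;\Q)$ with the conjugation action of $\Gamma$) and on which $\Gamma$ therefore acts through the finite group $Q$. That makes $M/M^s$ an uninteresting $\Gamma$-module by itself — it has finite image — so the real input has to come from a genuine element of $H$, not from the $Q$-action.

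So instead the construction should go: choose a class $v \in H_1(H;\Z) = H/[H,H]$ that maps to a nonzero element of $M/M^s$; concretely there is an element $\psi \in H$ (the analogue of the $\psi$ in Theorem \ref{thm:not gen}) whose abelianized class lies outside $H_1^s(S',\Q)$. Then I would build a homomorphism $H \to \Z$ (or $H \to \Z^r$, taking a $\Gamma$-invariant complement) detecting $v$, extend the situation to $\Gamma$ by inducing: form the $\Q$-representation $W = \operatorname{Hom}_{\Q[H]}(\Q[\Gamma], M/M^s)$, equivalently just let $\Gamma$ act on $M/M^s$ where $H$ acts through its abelianization composed with a chosen $H$-equivariant map to $M/M^s$ — but this is circular, since $H$ acts trivially on $M/M^s$. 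The honest fix is the coinduced module: let $\chi\colon H\to\Z$ be a homomorphism with $\chi(\psi)\neq 0$ that kills $\pi_1^s(S')\cap H$ — such a $\chi$ exists precisely because $H_1^s(S',\Q)\subsetneq H_1(S',\Q)$ — and set $\rho = \operatorname{CoInd}_H^\Gamma(\chi)\colon \Gamma \to \GL_{d}(\Z)$ with $d = [\Gamma:H]$, acting on $\bigoplus_{\Gamma/H}\Z$ by permuting the coordinates (via $Q$) and scaling each by the appropriate $\Gamma$-translate of $\chi$. Then: (i) $\rho$ has infinite image, since its restriction to $H$ surjects onto a finite-index subgroup of $\Z^d$ (the diagonal $\chi$ is unbounded on $\langle\psi\rangle$); (ii) for any simple $g \in \Gamma$, a power $g^{n}$ lies in $H$ and in fact in $\pi_1^s(S')$, so $\rho(g^n)$ acts trivially on each coordinate of the induced module — $\chi$ and all its $\Gamma$-conjugates vanish on $\pi_1^s(S')$ by normality of $\mathcal D$ and the definition of simple loop subgroup — hence $\rho(g^n)=1$ and $\rho(g)$ has finite order; (iii) the kernel of the permutation action is $H$ itself (finite index), and on $H$ the action is by a homomorphism to an abelian group, so the image of $\rho$ is virtually abelian, with the diagonal torsion-free abelian image of $H$ as the finite-index abelian subgroup.

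The one genuinely delicate point — and the step I expect to be the main obstacle — is verifying claim (ii) in full, i.e.\ that \emph{every} simple $g\in\pi_1(S)$, not merely a fixed generating set, has the property that the relevant power $g^{n}$ lands in a subgroup on which $\chi$ and all its $\Gamma$-translates vanish. This is exactly where the normal subgroup $\mathcal D$ from Subsection \ref{General surface} does the work: one takes $\chi$ to factor through $H/(H\cap\mathcal D) = H/\mathcal D$ (using $\mathcal D \subseteq R \subseteq H$), which by construction contains all elements $g^{n(g)}$ for simple $g$, and $\Gamma$-conjugation permutes the coordinates of the coinduced module while preserving $\mathcal D$ (it is normal in $\Gamma$), so $\chi\circ(\text{conjugation by }\gamma)$ also kills $\mathcal D$ for every $\gamma$. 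The remaining routine checks are that $H/\mathcal D$ has positive rank — this is precisely the rational hypothesis $\rk H_1^s(S',\Q) < \rk H_1(S',\Q)$, since $H_1^s(S',\Q)$ is the image of $\mathcal D$ in $H_1(S';\Q)$ — and that choosing $\chi$ to be any rational-valued (hence, after clearing denominators, integer-valued) functional on $H/\mathcal D$ nonzero on the image of $\psi$ makes $\rho(\psi)$, and therefore $\rho$, of infinite order. One should also note that the hypothesis only needs to be invoked for a \emph{single} such cover $S'$, which by Theorem \ref{thm:not gen homology}'s rational refinement (when available) always exists, giving the stated implication.
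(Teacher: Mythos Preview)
Your overall strategy is close in spirit to the paper's, but the central construction has a genuine gap. You take an additive homomorphism $\chi\colon H\to\Z$ and then write $\rho=\operatorname{CoInd}_H^\Gamma(\chi)\colon\Gamma\to\GL_d(\Z)$, described as ``permuting the coordinates (via $Q$) and scaling each by the appropriate $\Gamma$-translate of $\chi$.'' That description is the formula for inducing a one-dimensional \emph{multiplicative} character, and it does not make sense for an additive $\chi$: the value $\chi(h)\in\Z$ can be zero or negative, so the resulting monomial ``matrix'' is not invertible, and there is no honest $H$-module structure on $\Z$ for which $\chi$ is the action. One can repair this by first turning $\chi$ into a genuine linear representation, e.g.\ $h\mapsto\begin{pmatrix}1&\chi(h)\\0&1\end{pmatrix}\in\GL_2(\Z)$, and then inducing (giving dimension $2[\Gamma:H]$, not $[\Gamma:H]$); but as written the map $\rho$ is not defined, and the subsequent verifications of (i)--(iii) are checking properties of a non-object. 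A second, smaller point: you invoke the normal subgroup $\mathcal D$ from Subsection~\ref{General surface}, but that $\mathcal D$ is built from the TQFT representation $\rho_p$, which is not part of the hypotheses here. You should work directly with $\pi_1^s(S')$, which is already normal in $\pi_1(S)$ for exactly the reason you give (simplicity is conjugation-invariant).

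The paper's argument sidesteps the induction issue entirely and is worth comparing. It forms the quotient group $\Gamma=\pi_1(S)/[\pi_1(S'),\pi_1(S')]$, which sits in an extension $1\to H_1(S',\Z)\to\Gamma\to G\to 1$, and then passes to $\Gamma_m=\Gamma/(m\cdot A^{\Z})$ where $A^{\Z}$ is the saturation of $H_1^s(S',\Z)$ in $H_1(S',\Z)$. Because $A$ is $G$-invariant (your observation) and $B^{\Z}=H_1(S',\Z)\cap B$ injects into $\Gamma_m$, the quotient is infinite and virtually finitely generated abelian, hence linear over $\Z$ by the induced-representation argument applied to any faithful $\Z$-linear representation of the abelian finite-index subgroup. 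Simple elements die because $[g^{n(g)}]\in H_1^s(S',\Z)\subset A^{\Z}$. So instead of choosing a single functional $\chi$ and inducing, the paper keeps the whole complementary lattice $B^{\Z}$ and lets the abstract linearity of virtually abelian groups do the work. Your approach, once patched, is essentially a coordinate projection of this one.
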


We remark again that our general version of Theorem \ref{thm:not gen homology} implies a proper inclusion between $H_1^s(S',\Z)$ and $H_1(S',\Z)$, which may no longer be proper when tensored with $\Q$. Therefore, we do not get that Theorem \ref{thm:infinite image} and Theorem \ref{thm:not gen homology} are logically equivalent. Indeed, in order to deduce Theorem \ref{thm:not gen homology}, we had to use specific properties of the $\SO(3)$--TQFT representations.

\begin{proof}[Proof of Proposition \ref{prop:homology quotient}]
Let $S'\to S$ be a finite regular cover as furnished by the hypotheses of the proposition, and let $G$ be the deck group of the cover. Write \[H_1(S',\Q)\cong A\oplus B,\] where $A\cong H_1^s(S',\Q)$ and $B\neq 0$. Note that the natural action of $G$ on $H_1(S',\Q)$ respects the summand $A$, since being simple is a conjugacy invariant in $\pi_1(S)$. Write $A^{\Z}$ and $B^{\Z}$ for the intersections of these summands with $H_1(S',\Z)$. Note that $H_1^s(S',\Z)\subset A^{\Z}$. Notice that for each integer $m\geq 1$, the subgroup $m\cdot A^{\Z}$ is characteristic in $A^{\Z}$, and is hence stable under the $G$--action on $H_1(S',\Z)$. Let $\gam$ be the group defined by the extension \[1\to H_1(S',\Z)\to \gam\to G\to 1,\] which is precisely the group \[\gam\cong \pi_1(S)/[\pi_1(S'),\pi_1(S')].\] Write $\gam_m=\gam/(m\cdot A^{\Z}$). This group is naturally a quotient of $\pi_1(S)$.

Note that for all $m$, the group $\gam_m$ is virtually a finitely generated abelian group, since it contains a quotient of $H_1(S',\bZ)$ with finite index. Note that every finitely generated abelian group is linear over $\Z$, as is easily checked. Moreover, if a group $K$ contains a finite index subgroup $H<K$ which is linear over $\Z$, then $K$ is also linear over $\Z$, as is seen by taking the induced representation. It follows that for all $m$, the group $\gam_m$ linear over $\Z$. Furthermore, there is a natural injective map $B^{\Z}\to \gam_m$, so that $\gam_m$ is infinite. Finally, if $g\in\pi_1(S)$ is simple, then for some $n=n(g)>0$, we have that $[g^n]\in H_1^s(S',\Z)\subset A^{\Z}$. It follows that $g$ has finite order in $\gam_m$. Thus, the group $\gam_m$ has the properties claimed by the proposition.
\end{proof}

\end{document}